\numberwithin{equation}{section}
\tikzset{->-/.style={decoration={
  markings,
  mark=at position 0.5 with {\arrow{>}}},postaction={decorate}}}
\theoremstyle{plain}
\newtheorem*{theorem*}{Theorem}
\newtheorem{theorem}{Theorem}[section]
\newtheorem{lemma}[theorem]{Lemma}
\newtheorem{corollary}[theorem]{Corollary}
\newtheorem{proposition}[theorem]{Proposition}
\newtheorem*{lemma*}{Lemma}
\newtheorem*{question*}{Question}
\theoremstyle{definition}
\newtheorem{definition}[theorem]{Definition}
\newtheorem{example}[theorem]{Example}
\theoremstyle{remark}
\newtheorem{remark}[theorem]{Remark}
\newcommand{\R}{\mathbb{R}}
\newcommand{\N}{\mathbb{N}}
\newcommand{\Z}{\mathbb{Z}}
\newcommand{\defeq}{\mathrel{\mathop{:}}=}
\newcommand{\eqdef}{=\mathrel{\mathop{:}}}
\newcommand{\abs}[1]{\lvert #1 \rvert}
\newcommand{\lk}{\operatorname{lk}}
\newcommand{\dlk}{{\lk}{\downarrow}}
\newcommand{\CAT}{\operatorname{CAT}}
\newcommand{\F}{\text{F}}
\newcommand{\FP}{\text{FP}}
\newcommand{\id}{\operatorname{id}}
\newcommand{\calG}{\Gamma}
\newcommand{\frakh}{\mathfrak{h}}
\numberwithin{equation}{section}
\newcommand{\picangle}{60}
\newcommand{\loopangle}{40}
\begin{document}

\title{The Basilica Thompson group is not finitely presented}
\date{\today}
\subjclass[2010]{Primary 20F65;   
                Secondary 57M07} 

\keywords{Basilica Julia set, Thompson group, finite presentation, $\CAT(0)$ cube complex}

\author[S.~Witzel]{Stefan Witzel}
\address{Department of Mathematics, Bielefeld University, PO Box 100131, 33501 Bielefeld, Germany}
\email{switzel@math.uni-bielefeld.de}

\author[M.~C.~B.~Zaremsky]{Matthew C.~B.~Zaremsky}
\address{Department of Mathematical Sciences, Binghamton University, Binghamton, NY 13902}
\email{zaremsky@math.binghamton.edu}

\begin{abstract}
 We show that the Basilica Thompson group introduced by Belk and Forrest is not finitely presented, and in fact is not of type $\FP_2$. The proof involves developing techniques for proving non-simple connectedness of certain subcomplexes of $\CAT(0)$ cube complexes.
\end{abstract}

\maketitle
\thispagestyle{empty}


\section*{Introduction}

J. Belk and B. Forrest \cite{belk15} introduced the \emph{Basilica Thompson group} $T_B$ (defined in Definition~\ref{def:T_B} below). They showed that it is virtually simple, generated by four elements, and is a sub- as well as a supergroup of Thompson's group $T$. The question of whether it is finitely presented, however, remained open. In this paper we prove:

\begin{theorem*}
 $T_B$ is not finitely presented.
\end{theorem*}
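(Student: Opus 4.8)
The plan is to have $T_B$ act on a contractible $\CAT(0)$ cube complex with a cocompact sublevel filtration, and then apply Brown's criterion to obstruct type $\FP_2$ (hence finite presentability, since $T_B$ is finitely generated by Belk--Forrest). First I would construct a $\CAT(0)$ cube complex $X$ in the style of the Stein--Farley complexes for Thompson-like groups: vertices are finite ``basilica diagrams'' (finite stages of subdivision of the basilica), an edge is a single elementary expansion, and a $k$-cube records $k$ mutually independent expansions performed simultaneously. The poset of diagrams is directed under expansion, which --- by the standard arguments for such complexes --- forces $X$ to be contractible, while the independence combinatorics ensures the vertex links are flag, so $X$ is $\CAT(0)$. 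The $T_B$-action has finite cell stabilizers, and a complexity function $h\colon X\to\Z_{\ge 0}$ (counting the cells of the subdivision) is a $T_B$-invariant Morse function whose sublevel sets $X_{\le n}$ are $T_B$-cocompact. Once the $X_{\le n}$ are connected, Brown's criterion reduces the problem to showing that the direct system $\{H_1(X_{\le n})\}_n$ is \emph{not} essentially trivial: for some $n_0$ and every $m\ge n_0$ the map $H_1(X_{\le n_0})\to H_1(X_{\le m})$ is nonzero.

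The crux, and the step I expect to be the main obstacle, is to exhibit $1$-cycles in $X_{\le n_0}$ and certify that they stay nontrivial arbitrarily far up the filtration, even though every cycle dies in the ambient contractible complex $X$. This is exactly where one must develop a workable criterion for detecting non-triviality of $H_1$ (non-simple-connectedness) of a subcomplex $Y$ of a $\CAT(0)$ cube complex. I would argue via disk diagrams: a filling of a combinatorial loop $\gamma$ inside $Y$ yields a reduced disk diagram $D\to Y$, whose hyperplane/dual-curve structure is tightly constrained (embedded dual curves, no bigons, no self-intersections, each dual curve mapping into the carrier of a hyperplane of $X$). From the crossing pattern of $\gamma$ with the hyperplanes of $X$ one then reads off a combinatorial quantity that (i) is invariant under homotopies keeping the loop in a fixed sublevel set, (ii) is compatible with the inclusions $X_{\le n}\hookrightarrow X_{\le m}$, and (iii) forces any disk diagram filling $\gamma$ to contain cubes of arbitrarily large complexity. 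Choosing the loops $\gamma_n$ so that this quantity is computable, visibly nonzero, and persistent under the inclusions is the technical heart of the argument.

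Granting such a persistent family, the finiteness-property bookkeeping is routine. One passes from $X_{\le n}$ up to $X_{\le m}$ by successively coning off the descending links of the vertices of each intermediate height; a single cone-off can only kill $1$-cycles that become null in some descending link, so the obstruction guarantees that the family is never entirely destroyed, while a fresh bad configuration appears at every new height. Hence $\{H_1(X_{\le n})\}_n$ is not essentially trivial, so $T_B$ is not of type $\FP_2$ and, a fortiori, not finitely presented. For the headline statement alone it suffices to run the same scheme with $\pi_1$ in place of $H_1$, using van Kampen's theorem in the bookkeeping step.
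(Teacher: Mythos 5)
Your framing---the Belk--Forrest $\CAT(0)$ cube complex $K(G_0)$, the rank/complexity Morse function, cocompact sublevel sets, Brown's criterion to obstruct $\FP_2$---coincides with the paper's. (One small imprecision: what you need is not connectivity of the $X_{\le n}$ but connectivity of descending links above some height; that is what gives, via the Morse Lemma, surjectivity of $H_1(X_{\le n}) \to H_1(X_{\le m})$, so that non-$1$-acyclicity of each sublevel set implies the system is not essentially trivial. The paper packages this as Lemma~\ref{lem:fin_iff_conn}.)

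The genuine gap is exactly where you flag it, but you do not close it. You propose to show that sublevel sets are not $1$-acyclic via disk diagrams and dual curves, reading off from the hyperplane-crossing pattern of a loop $\gamma$ some ``combinatorial quantity'' that is monotone under inclusion and forces every filling to use high-complexity cubes. You never say what that quantity is, how it is computed, or which loops $\gamma_n$ realize it; this is the entire mathematical content of the theorem, and as stated the proposal relocates the problem rather than solving it. It is not at all clear that an invariant of the kind you describe exists, and I do not see how to extract one from the dual-curve structure of a reduced disk diagram alone.

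The paper's route through this step is substantially different and worth contrasting. Instead of disk diagrams, it uses a covering argument: a strengthened nerve lemma (Proposition~\ref{prop:nerve}/Corollary~\ref{cor:nerve}) says that if a complex is covered by connected pieces whose nerve is $S^1$, then $H_1$ is nonzero. The cover of the sublevel set $Y_n = K(G_0)_{2n+9}$ is by the four half-spaces $H_i^\pm \cap Y_n$ for two carefully chosen hyperplanes $H_1, H_2$; the crucial hypothesis is that $H_1 \cap H_2 \cap Y_n = \emptyset$ (Lemma~\ref{lem:two_walls}). The enabling structural fact is Lemma~\ref{lem:model_wall}: walls of $K(G_0)$ are themselves isomorphic to complexes $K(G)$ for smaller graphs $G$, so $H_1 \cap H_2 \cong K(O_n)$ for an explicit graph $O_n$. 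The Key Technical Lemma~\ref{lem:empty_sublevel} then shows $K(O_n)_{2n+3}$ is empty by a purely graph-theoretic invariant: the count of ``collapsible'' vertices minus the total vertex count is preserved under expansion/contraction, which bounds the size of every graph in the expansion class of $O_n$ from below. Together with connectivity of half-spaces (Lemma~\ref{lem:conn_halfspaces}) and non-emptiness of quarter-spaces (Lemma~\ref{lem:nonempty_quarterspaces}), this gives the nontrivial $H_1$ you are after. So where you gesture at an unnamed disk-diagram invariant, the paper finds a concrete graph-theoretic one and a global argument via covers by half-spaces, bypassing disk diagrams entirely.
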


The Basilica Thompson group is an example of a \emph{rearrangement group of a fractal} as defined by Belk and Forrest in \cite{belk15a}. These groups arise from \emph{edge replacement systems} and act naturally on self-similar spaces, for example Julia sets. They generalize \emph{Thompson's groups} $F$, $T$ and $V$. For certain rearrangement groups Belk and Forrest proved that they are of \emph{type $\F_\infty$}, confirming the ``expected'' behavior for relatives of Thompson's groups. As for the classical Thompson's groups, the proof relies on studying the action on an associated $\CAT(0)$ cube complex. The reason that (non-)finite presentability of $T_B$ remained open is that the known local methods are not suited to prove the kind of negative connectivity statement needed. Our proof of the theorem involves a global analysis.

We now know of (virtually) simple rearrangement groups with two extremal finiteness properties: of type $\F_1$ but not of type $\F_2$, and of type $\F_\infty$. If the intermediate finiteness properties could also be achieved, e.g.\ of type $\F_n$ but not $\F_{n+1}$ for any $n$, this would provide an infinite family of pairwise non-quasi-isometric simple groups (the only other known examples of this kind are hyperbolic Kac--Moody groups \cite{caprace09}). We therefore ask:

\begin{question*}
 Is there a rearrangement group that is virtually simple, finitely presented, but not of type $F_\infty$?
\end{question*}

\subsection*{Acknowledgments} We are grateful to Jim Belk for posing this problem to us, to Belk and Brad Forrest for helpful discussions, and to the Binghamton University math department for its hospitality while the first named author was visiting the second named author. The first named author also gratefully acknowledges support by the DFG through the project WI 4079/2 and through the SFB 701.


\section{Background}\label{sec:background}

In this section we recall the necessary background material from Sections 1, 2 and 3 of \cite{belk15a} on \emph{edge replacement systems}, and the resulting groups and complexes. At the end we also recall some background on discrete Morse theory, and on $\CAT(0)$ cube complexes.

\subsection{Edge replacement systems}

We will consider finite directed graphs $G$. The notation $V(G)$ and $E(G)$ will always mean the vertex and edges sets of $G$.

\begin{definition}[Edge replacement rule]\label{def:replacement_rule}
 An \emph{(edge) replacement rule} is a pair $e\to R$, for $e$ a non-loop directed edge, say with initial vertex $v$ and terminal vertex $w$, and $R$ a finite directed graph with $v,w\in V(R)$.
\end{definition}

\begin{definition}[Edge replacement system]\label{def:replacement_system}
 An \emph{(edge) replacement system} is a pair $(G_0,e\to R)$, for $G_0$ a finite directed graph, called the \emph{base graph}, and $e\to R$ a replacement rule. We will always assume $(G_0,e\to R)$ is \emph{expanding} (see \cite[Definition~1.8]{belk15a}), i.e.\ $G_0$ has no isolated vertices, $v$ and $w$ do not share an edge in $R$, and $|V(R)|\ge 3$ and $|E(R)|\ge 2$.
\end{definition}

Applying a replacement rule to the edge $\varepsilon\in E(G)$ of a graph $G$ amounts to removing the edge $\varepsilon$ and replacing it by $R$. Note that we do allow $\varepsilon$ to be a loop. We denote by $G\lhd \varepsilon$ the graph obtained from $G$ by replacing the edge $\varepsilon$ by $R$ in this way. We call $G\lhd \varepsilon$ a \emph{simple expansion} of $G$. Any graph obtained from $G$ by a finite sequence of simple expansions is called an \emph{expansion} of $G$. The reverse of a (simple) expansion is called a \emph{(simple) contraction}. We will ``address'' edges and vertices of expansions by concatenating addresses of edges and vertices from $G$ and $R$; for example, if the edges of $R$ are called $1,2,3$ then the new edges in the expansion $G\lhd \varepsilon$ are called $\varepsilon1,\varepsilon2,\varepsilon3$, and the old edges retain their addresses from $G$. See Example~\ref{ex:T_B} for pictures.

\begin{definition}[Limit space]\label{def:limit space}
 Let $\mathcal{R}=(G_0,e\to R)$ be an edge replacement system. For each $n\in\N$ let $G_n$ be the result of applying the replacement rule to each edge of $G_{n-1}$ once. Let $\Omega\defeq E(G_0)\times E(R)^\infty$ be the set of sequences of edges, with leading edge from $G_0$ and all others from $R$. Declare that two such sequences $\varepsilon_0 \varepsilon_1 \cdots$ and $\varepsilon_0' \varepsilon_1' \cdots$ are \emph{equivalent} if for all $n$ the edges of $G_n$ with ``addresses'' $\varepsilon_0\cdots \varepsilon_n$ and $\varepsilon_0'\cdots \varepsilon_n'$ share a vertex. Define the \emph{limit space} $X$ for $\mathcal{R}$ to be the space of equivalence classes $[\varepsilon_0 \varepsilon_1 \cdots]$ of such sequences.
\end{definition}

The limit space $X$ of a replacement system $\mathcal{R}$ is compact and metrizable (\cite[Theorem~1.24]{belk15a}). Our groups of interest are certain groups of homeomorphisms of such limit spaces, which is the subject of the next subsection.

\subsection{Rearrangement groups}

We now define \emph{rearrangement groups}, which are certain groups of homeomorphisms of limit spaces.

\begin{definition}[Cell]
 Let $X$ be the limit space of a replacement system $\mathcal{R}=(G_0,e\to R)$. Let $\varepsilon = \varepsilon_0\cdots\varepsilon_n$ be an edge of some expansion of $G_0$. The \emph{cell} $C(\varepsilon) \subseteq X$ is the subspace consisting of equivalence classes of sequences representable by a sequence with $\varepsilon_0\cdots\varepsilon_n$ as a prefix.
\end{definition}

\begin{definition}[Canonical homeomorphism]
 Let $C(\varepsilon)$ and $C(\varepsilon')$ be two cells such that $\varepsilon$ and $\varepsilon'$ are either both loops or both non-loops. The \emph{canonical homeomorphism} $\Phi \colon C(\varepsilon)\to C(\varepsilon')$ is the map defined via the prefix replacement rule
 $$\Phi([\varepsilon\zeta_1\zeta_2\cdots])\defeq [\varepsilon'\zeta_1\zeta_2\cdots]\text{.}$$
\end{definition}

\begin{definition}[Rearrangement]\label{def:rearrangement}
 Let $X$ be the limit space of a replacement system $\mathcal{R}=(G_0,e\to R)$. A homeomorphism $f\colon X\to X$ is called a \emph{rearrangement} if there exist finitely many cells $C(\varepsilon_1),\dots,C(\varepsilon_n)$ such that the cells cover $X$, have pairwise disjoint \emph{interiors} (defined in \cite[Section~1.3]{belk15a}), and such that each restriction $f|_{C(\varepsilon_i)}$ is a canonical homeomorphism.
\end{definition}

\begin{definition}[Rearrangement group]
 The rearrangements of $X$ form a group \cite[Proposition~1.15]{belk15a}, called the \emph{rearrangement group} $\calG$ of $X$.
\end{definition}

\begin{definition}[Graph pair diagram]\label{def:graph_pair_diagram}
 Let $f\colon X\to X$ be a rearrangement. A \emph{graph pair diagram} for $f$ is a triple $(E_-,E_+,\varphi)$, where $E_\pm$ are expansions of $G_0$ and $\varphi \colon E_- \to E_+$ is a graph isomorphism, such that for every edge $\varepsilon$ of $E_-$ the restriction of $f$ to $C(\varepsilon)$ is a canonical homeomorphism from $C(\varepsilon)$ to $C(\varphi(\varepsilon))$. The idea is that, even though $f$ is a homeomorphism of $X$, it can already be realized at some finite expansion stage. In a graph pair diagram $(E_-,E_+,\varphi)$, we call $E_-$ the \emph{domain graph} and $E_+$ the \emph{range graph}.
\end{definition}

\begin{example}[The Basilica rewriting system]\label{ex:T_B}
Consider the replacement rule

\begin{center}
\begin{tikzpicture}[scale=2, very thick]
\begin{scope}[every node/.style={circle, fill, inner sep=2pt}]
\node[label=left:{$v$}] (lv) at (0,0) {};
\node[label=left:{$w$}] (lw) at (0,2) {};

\node[label=left:{$v$}] (v) at (1.5,0) {};
\node[label=left:{$w$}] (w) at (1.5,2) {};
\node[label=left:{$4$}] (4) at (1.5,1) {};
\end{scope}
\draw[->-] (lv) to (lw);

\node at (0.75,1) {$\to$};

\draw[->-] (v) to[left] node{$1$} (4);
\draw[->-] (4) to[out=-\loopangle, in=\loopangle, min distance=1cm, looseness=5,right] node{$2$} (4);
\draw[->-] (4) to[left] node{$3$} (w);
\end{tikzpicture}
\end{center}

where $E(R) = \{1,2,3\}$ and $V(R) = \{4\}$. If $G_0$ is the graph

\begin{center}
\begin{tikzpicture}[scale=2, very thick]
\begin{scope}[every node/.style={circle, fill, inner sep=2pt}]
\node[label=below:{$x$}] (x) at (0,0) {};
\node[label=below:{$y$}] (y) at (1,0) {};
\end{scope}
\begin{scope}
\path (x) edge[->-,out=180-\loopangle, in = 180+\loopangle, min distance = 1cm, looseness=5, left] node {$a$} (x);
\path (y) edge[->-,out=0-\loopangle, in = 0+\loopangle, min distance = 1cm, looseness=5, right] node {$c$} (y);
\path (x) edge[->-,out=0-\picangle, in = 180+\picangle, below] node {$b$} (y);
\path (y) edge[->-,out=180-\picangle, in = 0+\picangle, above] node {$d$} (x);
\end{scope}
\end{tikzpicture}
\end{center}

then for example the graphs

\begin{center}
\begin{tikzpicture}[scale=2, very thick,none/.style={text opacity=0,opacity=0,fill opacity=0}]
\begin{scope}[every node/.style={circle, fill, inner sep=2pt}]
\node[label=below:{$x$}] (x) at (0,0) {};
\node[label=below:{$y$}] (y) at (1,0) {};
\draw[none] (y) to [out=180-\picangle, in=0+\picangle]  coordinate[pos=0.5] (xy) (x);
\node[label=below:{$d4$}] (d4) at (xy) {};
\end{scope}
\begin{scope}
\path (x) edge[->-,out=180-\loopangle, in = 180+\loopangle, min distance = 1cm, looseness=5, left] node {$a$} (x);
\path (y) edge[->-,out=0-\loopangle, in = 0+\loopangle, min distance = 1cm, looseness=5, right] node {$c$} (y);
\path (d4) edge[->-,out=90-\loopangle, in = 90+\loopangle, min distance = 1cm, looseness=5, above] node {$d2$} (d4);
\path (x) edge[->-,out=0-\picangle, in = 180+\picangle, below] node {$b$} (y);
\path (y) edge[->-,out=180-\picangle, in = 0, above right] node {$d1$} (d4);
\path (d4) edge[->-,out=180, in = 0+\picangle, above left] node {$d3$} (x);
\end{scope}
\end{tikzpicture}
\end{center}
and
\begin{center}
\begin{center}
\begin{tikzpicture}[scale=2, very thick,none/.style={text opacity=0,opacity=0,fill opacity=0}]
\begin{scope}[every node/.style={circle, fill, inner sep=2pt}]
\node[label=above:{$x$}] (x) at (0,0) {};
\node[label=above:{$y$}] (y) at (1,0) {};
\draw[none] (x) to [out=0-\picangle, in=180+\picangle]  coordinate[pos=0.5] (xy) (y);
\node[label=above:{$b4$}] (b4) at (xy) {};
\end{scope}
\begin{scope}
\path (x) edge[->-,out=180-\loopangle, in = 180+\loopangle, min distance = 1cm, looseness=5, left] node {$a$} (x);
\path (y) edge[->-,out=0-\loopangle, in = 0+\loopangle, min distance = 1cm, looseness=5, right] node {$c$} (y);
\path (b4) edge[->-,out=-90-\loopangle, in = -90+\loopangle, min distance = 1cm, looseness=5, below] node {$b2$} (b4);
\path (y) edge[->-,out=180-\picangle, in = 0+\picangle, above] node {$b$} (x);
\path (x) edge[->-,out=0-\picangle, in = 180, below left] node {$b1$} (b4);
\path (b4) edge[->-,out=0, in = 180+\picangle, below right] node {$b3$} (y);
\end{scope}
\end{tikzpicture}
\end{center}
\end{center}

are expansions of $G_0$, with the addresses given for the edges and vertices. For $X$ the limit space of the rearrangement system, the map $X \to X$ given piecewise by the canonical homeomorphisms
\begin{align*}
a* &\mapsto a*\\
b* &\mapsto b1*\\
c* &\mapsto b2*\\
d1* &\mapsto b3*\\
d2* &\mapsto c*\\
d3* &\mapsto d*
\end{align*}
is a rearrangement. The graph pair diagram for this map consists of the above graphs, with the isomorphism $\varphi$ given by erasing the ``$*$'''s in these canonical homeomorphisms.
\end{example}

\begin{definition}\label{def:T_B}
 The \emph{Basilica Thompson group} $T_B$ is the rearrangement group of the rewriting system in Example~\ref{ex:T_B}.
\end{definition}

\subsection{Cube complexes for rearrangement groups}

In this subsection, we recall the $\CAT(0)$ cube complex on which a rearrangement group $\calG$ acts. The replacement rule $e\to R$ will be fixed throughout. Given a base graph $G_0$, we will denote the cube complex by $K(G_0,e\to R)$. In \cite{belk15a} it was denoted $K(\calG)$, but for our purposes it is important to keep track of the base graph $G_0$ used, and less important to keep track of the group $\calG$. For any choice of base graph $G_0$, denote by $X(G_0)$ the limit space of the replacement system $(G_0,e\to R)$. We extend the definition of rearrangement from Definition~\ref{def:rearrangement} as follows:

\begin{definition}[Rearrangement]
 Let $G_0$ and $G$ be two choices of base graph, so we have limit spaces $X(G_0)$ and $X(G)$. A homeomorphism $f\colon X(G_0)\to X(G)$ is called a \emph{rearrangement} if there exist finitely many cells $C(\varepsilon_1),\dots,C(\varepsilon_n)$ such that the cells cover $X(G_0)$, have pairwise disjoint interiors, and such that each restriction $f|_{C(\varepsilon_i)}$ is a canonical homeomorphism.
\end{definition}

The category whose objects are the $X(G)$ and whose morphisms are rearrangements is a groupoid. Note that depending on the choices of $G_0$ and $G$, a rearrangement $X(G_0)\to X(G)$ might not exist, thus the groupoid is not connected.

We can also extend the definition of graph pair diagram from Definition~\ref{def:graph_pair_diagram}:

\begin{definition}[Graph pair diagram]
 Let $f\colon X(G_0)\to X(G)$ be a rearrangement. A \emph{graph pair diagram} for $f$ is a triple $(E_-,E_+,\varphi)$ where $E_-$ is an expansion of $G_0$, $E_+$ is an expansion of $G$ and $\varphi \colon E_- \to E_+$ is a graph isomorphism, such that for every edge $\varepsilon$ of $E_-$ the restriction of $f$ to $C(\varepsilon)$ is a canonical homeomorphism from $C(\varepsilon)$ to $C(\varphi(\varepsilon))$.
\end{definition}

Rearrangements can be decomposed into certain fundamental rearrangements.

\begin{definition}[Special rearrangements]
 If $G$ is an expansion of $G_0$, there is a canonical rearrangement $\varphi \colon X(G_0) \to X(G)$ with diagram $(G,G,\id)$. We call it an \emph{expansion rearrangement}. Its inverse is a \emph{contraction rearrangement}. If the expansion was simple, we also say that the expansion/contraction rearrangement is \emph{simple}. If $\varphi \colon G_0 \to G$ is a graph isomorphism then the diagram $(G_0,G,\varphi)$ represents a rearrangement $X(G_0) \to X(G)$, also denoted $\varphi$, which is called a \emph{base isomorphism}.
\end{definition}

Thus any rearrangement is a product of an expansion rearrangement, a base isomorphism, and a contraction rearrangement.

We are now approaching the definition of the cube complex $K(G_0,e\to R)$. The fundamental objects here are rearrangements with a fixed domain $X(G_0)$.

\begin{definition}[Range equivalence, expansion/contraction]
 Let $f\colon X(G_0) \to X(G_1)$ and $g\colon X(G_0) \to X(G_2)$ be two arrangements. We say that $f$ and $g$ are \emph{range equivalent} if there is a base isomorphism $\varphi \colon X(G_1) \to X(G_2)$ such that $\varphi \circ f = g$. We write $[f]$ for the range equivalence class of $f$. We say that $g$ is a \emph{(simple) expansion} of $f$ if there is a (simple) expansion rearrangement $\varphi \colon X(G_1) \to X(G_2)$ such that $\varphi \circ f = g$. In that case $f$ is a \emph{(simple) contraction} of $g$. We also apply these notions to $[f]$ and $[g]$. In special cases we will need notation for this. If $S = \{\varepsilon_1, \ldots, \varepsilon_k\} \subseteq E(G_1)$ is the set of edges of $G_1$ such $G_2 =  G_1 \lhd \varepsilon_1 \lhd \ldots \varepsilon_k$ (the order does not matter) then we write $\varphi = \Delta_S$. Conversely if $T = \{R_1, \ldots, R_k\}$ are the replacements for the $\varepsilon_i$ then we write $\varphi^{-1} = \nabla_T$. If such an $S$ or $T$ has only one element, we may omit the set braces from the notation, and so write things like $\Delta_\varepsilon$ and $\nabla_{R}$. In that case we may specify the subgraph $R$ of $G_2$ by listing its edges, writing for example $\nabla_{\varepsilon1,\varepsilon2,\varepsilon3}$.
\end{definition}



\begin{definition}[The cube complex]
 Let $K(G_0,e\to R)$ be the cube complex defined as follows. There is a $0$-cube for every range equivalence class $[f]$ of rearrangements with domain $X(G_0)$. For each $0$-cube $[f]$, say represented by the rearrangement $f\colon X(G_0)\to X(G)$, and for each $S \subseteq E(G)$, there is an $\abs{S}$-dimensional cube whose $0$-subcube set is $\{[\Delta_T\circ f]\mid T\subseteq S\}$.
\end{definition}

For example, if $f \colon X(G_0) \to X(G)$ is a rearrangement and $\varepsilon$ is an edge of $G$ then $[f]$ and $[\Delta_\varepsilon \circ f]$ span a $1$-cube. Belk and Forrest proved that $K(G_0,e\to R)$ is a $\CAT(0)$ cube complex:

\begin{proposition}[{\cite[Proposition~3.33, Corollary~3.24]{belk15a}}]
 The complex $K(G_0,e\to R)$ is contractible, and in fact is $\CAT(0)$.
\end{proposition}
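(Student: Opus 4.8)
The plan is to verify Gromov's link condition in its combinatorial form: a cube complex is $\CAT(0)$ if and only if it is connected, simply connected, and the link of every vertex is a flag simplicial complex. Since a $\CAT(0)$ space is contractible, both assertions of the proposition follow once these three facts are checked for $K\defeq K(G_0,e\to R)$. It is convenient to regard the $0$-cubes of $K$ as a poset $\mathcal P$ under the expansion relation ($[f]\le[g]$ when $[g]$ is an expansion of $[f]$): its covering relations are the simple expansions, these are exactly the $1$-cubes of $K$, and a cube of $K$ is exactly an interval $[\,[h],[\Delta_S\circ h]\,]$ for $S$ a set of edges of the range graph of $h$. For $[f]$ represented by $f\colon X(G_0)\to X(G)$ put $s([f])\defeq\abs{E(G)}$; this is well defined because base isomorphisms preserve edge counts, it is at least $1$, and it increases by exactly $\abs{E(R)}-1\ge 1$ along every $1$-cube of $K$.

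Connectivity is immediate from graph pair diagrams: given $f\colon X(G_0)\to X(G)$, pick a graph pair diagram $(E_-,E_+,\varphi)$ for $f$ and let $\iota\colon X(G_0)\to X(E_-)$ be the expansion rearrangement onto $X(E_-)$. Then $[\id]\le[\iota]$, and since $\varphi$ is a base isomorphism while $\varphi\circ\iota$ is an expansion of $f$, also $[f]\le[\varphi\circ\iota]=[\iota]$; thus every $0$-cube is joined to the basepoint $[\id]$ by an edge path. Running the same argument on a pair $f_1,f_2$ using the rearrangement $f_2\circ f_1^{-1}$ shows moreover that $\mathcal P$ is directed, which is convenient but not logically needed below.

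For flagness of $\lk([f])$, with $f\colon X(G_0)\to X(G)$, one unwinds the definition of $K$: a cube containing $[f]$ corresponds to a collection consisting of pairwise edge-disjoint contractible $R$-subgraphs of $G$ (the directions toward contractions of $[f]$, realized by $\nabla$) together with edges of $G$ lying in none of those subgraphs (the directions toward expansions of $[f]$, realized by $\Delta$), the cube's dimension being the size of the collection. So a vertex of $\lk([f])$ is a contractible $R$-subgraph of $G$ or an edge of $G$, and a finite vertex set spans a simplex iff the $R$-subgraphs among them are pairwise edge-disjoint and no edge among them lies in any of those $R$-subgraphs. Both conditions are conditions on pairs, and any two distinct edges of $G$ already span an edge of $\lk([f])$ (expand both); hence ``every pair spans an edge'' is equivalent to ``the whole set spans a simplex,'' which is exactly the flag property. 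One point deserves isolation (it is used again below): two distinct contractible $R$-subgraphs of $G$ are automatically edge-disjoint, since every edge of a contractible $R$-subgraph has an address of the form $\mu\alpha$ with $\alpha\in E(R)$ and $\mu$ the address of the edge the subgraph contracts to, and equating two such addresses forces $\mu$, hence the subgraph, to coincide.

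What remains is simple connectedness, which is the only genuinely global point and the step I expect to be the main obstacle. I would prove it by a loop-reduction argument. Let $\gamma=\bigl([f_0]\!-\!\cdots\!-\![f_m]=[f_0]\bigr)$ be an edge loop and induct on the pair $(s_{\max}(\gamma),N(\gamma))$ ordered lexicographically, where $s_{\max}(\gamma)=\max_i s([f_i])$ and $N(\gamma)=\#\{\,i: s([f_i])=s_{\max}(\gamma)\,\}$. If all the $s([f_i])$ agree then $\gamma$ traverses no $1$-cube (each changes $s$) and so is constant. Otherwise fix $i$ with $s([f_i])=s_{\max}(\gamma)$; since $s$ changes by $\pm(\abs{E(R)}-1)$ across $1$-cubes, both $[f_{i-1}]$ and $[f_{i+1}]$ lie strictly below $[f_i]$ in $\mathcal P$, i.e.\ are obtained from $[f_i]$ by simple contractions, say by contracting contractible $R$-subgraphs $\rho,\rho'$ of the range graph of $f_i$. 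If $\rho=\rho'$ then $[f_{i-1}]=[f_{i+1}]$ and $\gamma$ backtracks at $[f_i]$; delete the backtrack. If $\rho\ne\rho'$ they are edge-disjoint by the fact above, hence simultaneously contractible, so the $1$-cubes $[f_{i-1}]\!-\![f_i]$ and $[f_i]\!-\![f_{i+1}]$ are two sides of a $2$-cube of $K$ whose remaining vertex $[g]$ satisfies $s([g])=s_{\max}(\gamma)-2(\abs{E(R)}-1)<s_{\max}(\gamma)$; homotope the subpath $[f_{i-1}]\!-\![f_i]\!-\![f_{i+1}]$ across this $2$-cube. Either operation strictly decreases $N(\gamma)$ and creates no vertex of $s$-value larger than $s_{\max}(\gamma)$; iterating drives $N$ to $0$, whereupon $s_{\max}$ has dropped, and the induction closes. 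Hence every edge loop is null-homotopic, $K$ is simply connected, and Gromov's criterion shows $K$ is $\CAT(0)$ and therefore contractible. (Contractibility can alternatively be obtained directly via discrete Morse theory, matching each cube to its bottom vertex and checking that $[\id]$ is the only critical cell; the bookkeeping is essentially the peak-reduction just described.)
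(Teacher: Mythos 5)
The paper does not prove this statement itself; it simply cites Belk and Forrest (\cite[Proposition~3.33, Corollary~3.24]{belk15a}), so there is no in-paper argument to compare against. Your route via Gromov's link condition is a perfectly sensible one, and most of it is sound: the identification of cubes with intervals, the connectivity argument via graph pair diagrams, and the fact that the condition for a collection of contraction/expansion directions at $[f]$ to span a cube is a \emph{pairwise} condition (hence links are flag) are all correct.

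The gap is in the isolated claim that two distinct contractible $R$-subgraphs of $G$ are automatically edge-disjoint, and it matters because you reuse it in the simple-connectivity step. The claim is false. Your address argument would require every contractible $R$-subgraph to consist of edges $\mu 1,\mu 2,\mu 3$ for a single prefix $\mu$, but that normal form only applies to the subgraph created by the most recent expansion; graphs appearing as range graphs in $K(G_0)$ are reached by expansions \emph{and} contractions, and contraction creates ``new'' contractible subgraphs that do not fit this pattern. Concretely, already in $G_0 \lhd a \lhd a1$ for the Basilica rule, both $\{a11,a12,a13\}$ (internal vertex $a14$) and $\{a13,a2,a3\}$ (internal vertex $a4$) are contractible $R$-subgraphs — each internal vertex has degree exactly $4$ with all incident edges inside the respective subgraph — and they share the non-loop edge $a13$. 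The two corresponding downward $1$-cubes at this vertex do \emph{not} bound a $2$-cube with it, so the subpath $[f_{i-1}] - [f_i] - [f_{i+1}]$ cannot be pushed across a square, and your peak reduction stalls precisely in this overlapping case. (Flagness is unaffected, since you phrased the link criterion with ``pairwise edge-disjoint'' and never actually needed automatic disjointness there.) Repairing this requires a genuine detour: the paper itself confronts exactly this configuration inside Lemma~\ref{lem:conn_halfspaces}, in the sub-case ``$A$ and $B$ have an edge in common,'' and the remedy there is a multi-step path (contract further to $\nabla_{B'}\nabla_A$, expand a well-chosen edge of $Z$, untwist, retrace) rather than a single square move. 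Some device of that sort — or a direct verification of simple connectivity of a low sublevel set together with a Morse argument above it — is needed before your induction closes.
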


\begin{definition}[Rank]\label{def:rank}
 The \emph{rank} of a rearrangement $f \colon X(G_0) \to X(G)$ is $\mu(f)\defeq |E(G)|$. Since range equivalent rearrangements have the same rank, we can also define the rank of an equivalence class $\mu([f])\defeq \mu(f)$.
\end{definition}

It is easy to see (and explained in \cite{belk15a}) that $\mu$ extends to a Morse function on the cube complex $K(G_0,e\to R)$ (for any graph $G_0$ and replacement system $e\to R$). We review the relevant Morse theoretic concepts in the next subsection.

\subsection{Morse theory}\label{sec:morse}

\begin{definition}[Morse function]
 Let $X$ be a cube complex and let $h\colon X\to \R$ be a map. We call $h$ a \emph{Morse function} if the following properties hold:
 \begin{enumerate}
  \item The image $h(X^{(0)})$ is discrete in $\R$.
  \item For any cube $c$, the restriction of $h$ to $c$ is an affine function $h|_c \colon c\to\R$.
  \item For any cube $c$ of positive dimension, the restriction of $h$ to $c$ is non-constant.
 \end{enumerate}
\end{definition}

Here when we say that $h|_c \colon c\to \R$ is an affine function, we are viewing $c$ as $[-1,1]^{\dim(c)}$ with the usual affine structure.

Given a cube complex $X$ with a Morse function $h$, for any $m\in\R$ we denote by $X_m$ the subcomplex of $X$ supported on those $0$-cubes $x$ with $h(x)\le m$.
%
%
%
%
The \emph{descending link} $\dlk x$ of a $0$-cube $x$ is the link of $x$ in the sublevel set $X_{h(x)}$. That is, the descending link is the subcomplex of the link $\lk x$ supported on those $0$-simplices along which $h$ is decreasing. Our Morse Lemma (which is a special case of \cite[Corollary~2.6]{bestvina97}) is as follows:

\begin{lemma}[Morse Lemma]\label{lem:morse}
 Let $X$ be a cube complex and $h\colon X\to \R$ a Morse function. Suppose $m\le M$ are real numbers, and that for all $0$-cubes $x\in X^{(0)}$ with $m<h(x)\le M$ the descending link $\dlk x$ is $(n-1)$-acyclic. Then the inclusion $X_m \to X_M$ induces an isomorphism in $H_k$ for $k\le n-1$, and an epimorphism in $H_n$.
\end{lemma}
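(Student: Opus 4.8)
The plan is to run the standard PL Morse theory argument: build $X_M$ up from $X_m$ across one ``critical value'' of $h$ at a time, and control each stage by its descending links. This is precisely the content of \cite[Corollary~2.6]{bestvina97}; below is a sketch. By the long exact sequence of the pair $(X_M,X_m)$, the asserted statement---isomorphism on $H_k$ for $k\le n-1$ and epimorphism on $H_n$---is equivalent to $H_k(X_M,X_m)=0$ for all $k\le n$, and it is this that I will prove.

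Since $h(X^{(0)})$ is discrete in $\R$, the intersection $h(X^{(0)})\cap(m,M]$ is finite, say $\{t_1<\dots<t_N\}$. Choose $m=s_0<s_1<\dots<s_N=M$ with $s_{i-1}<t_i<s_i$, so that $(s_{i-1},s_i]$ contains no value of $h$ on $X^{(0)}$ other than $t_i$. Then $X_{s_0}=X_m$, $X_{s_N}=X_M$, and by iterating the long exact sequence of the triples $(X_{s_i},X_{s_{i-1}},X_{s_0})$ it suffices to prove $H_k(X_{s_i},X_{s_{i-1}})=0$ for $k\le n$, for each fixed $i$. Fix $i$ and set $t=t_i$. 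I would next replace the subcomplexes $X_{s_{i-1}}\subseteq X_{s_i}$ by the homotopy equivalent sublevel \emph{subspaces} $h^{-1}((-\infty,s_{i-1}])\subseteq h^{-1}((-\infty,t])$: because $h$ is affine on each cube, each cube meets a sublevel subspace in a convex (hence contractible) polytope containing that cube's bottom vertex, and pushing these polytopes to their bottom vertices gives the required deformation retractions (here one uses that no value of $h$ on $X^{(0)}$ lies in $(s_{i-1},t)$ or in $(t,s_i]$). Since also $h^{-1}((-\infty,s_{i-1}])\simeq h^{-1}((-\infty,t))$ for the same reason, it is enough to show $H_k\bigl(h^{-1}((-\infty,t]),h^{-1}((-\infty,t))\bigr)=0$ for $k\le n$.

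The crux is the local analysis at the vertices $\{x_\alpha\}_\alpha$ with $h(x_\alpha)=t$. Since $h$ is affine and \emph{non-constant} on each positive-dimensional cube, the face of a cube on which $h$ is maximal cannot be positive-dimensional, so every cube has a unique top vertex; hence a cube whose top vertex is some $x_\alpha$ has all its other vertices strictly below $t$. Consequently, choosing $\delta>0$ small enough that $t-\delta$ exceeds every value of $h$ on $X^{(0)}$ that is $<t$, the sets $U_\alpha\defeq\dst x_\alpha\cap h^{-1}([t-\delta,t])$ are pairwise disjoint; each $U_\alpha$ is a cone on $\dlk x_\alpha$ with apex $x_\alpha$; $U_\alpha\setminus\{x_\alpha\}$ lies in $h^{-1}((-\infty,t))$ and deformation retracts onto a copy of $\dlk x_\alpha$; and $h^{-1}((-\infty,t])=h^{-1}((-\infty,t))\cup\bigsqcup_\alpha U_\alpha$. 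Thickening the $U_\alpha$ to pairwise disjoint open neighborhoods and excising, we get
$$H_k\bigl(h^{-1}((-\infty,t]),h^{-1}((-\infty,t))\bigr)\;\cong\;\bigoplus_\alpha H_k(U_\alpha,U_\alpha\setminus\{x_\alpha\})\;\cong\;\bigoplus_\alpha H_k(\Con(\dlk x_\alpha),\dlk x_\alpha)\;\cong\;\bigoplus_\alpha\widetilde H_{k-1}(\dlk x_\alpha),$$
which vanishes for $k-1\le n-1$, i.e.\ for $k\le n$, because each $\dlk x_\alpha$ is $(n-1)$-acyclic. This completes the step and hence the proof.

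I expect the main obstacle to be the point-set bookkeeping in the last two paragraphs: that $\dst x_\alpha$ really is homeomorphic to the cone on $\dlk x_\alpha$ with apex $x_\alpha$, that the various sublevel subspaces deformation retract onto the corresponding subcomplexes, and that excision applies uniformly when there are infinitely many $x_\alpha$. None of this is deep, but it is the real substance, and it is exactly what is isolated in the cited form \cite[Corollary~2.6]{bestvina97} of the Morse Lemma---so in practice I would simply quote that.
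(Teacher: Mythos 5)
Your approach coincides with the paper's: the paper gives no argument for this lemma at all, simply noting it is a special case of Bestvina--Brady \cite[Corollary~2.6]{bestvina97}, which is exactly what you defer to, and your sketch is the standard proof behind that citation (reduce to relative vanishing, cross one critical level at a time, excise descending stars, use the cone/link pair). One inaccuracy in the crux paragraph: the claimed equality $h^{-1}((-\infty,t]) = h^{-1}((-\infty,t)) \cup \bigsqcup_\alpha U_\alpha$ is false in general, because a point at height exactly $t$ may lie in the interior of a cube whose vertices straddle $t$, hence in no descending star of a height-$t$ vertex; the correct statement (and what Bestvina--Brady actually prove) is that $h^{-1}((-\infty,t])$ deformation retracts onto $h^{-1}((-\infty,t'])\cup\bigcup_\alpha \dst x_\alpha$, where $t'$ is the largest vertex value below $t$, after which your excision and cone-on-$\dlk x_\alpha$ computation goes through verbatim.
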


The connection between Morse theory and finite presentability is made as follows. First recall that if a group is finitely presented then it satisfies the homological finiteness property of being of type $\FP_2$. To show that $T_B$ is not of type $\FP_2$ we will use the following criterion.

\begin{lemma}\label{lem:fin_iff_conn}
 Let a group $\calG$ act with finite stabilizers on a $1$-acyclic cube complex $X$. Let $h\colon X\to\R$ be a Morse function on $X$, and suppose that each $X_m$ is $\calG$-invariant and cocompact. Suppose there exists $N\in\N$ such that for any $x\in X^{(0)}$ with $h(x)>N$, the descending link of $x$ is connected. Then $\calG$ is of type $\FP_2$ if and only if $X_m$ is $1$-acyclic for some $m \ge N$.
\end{lemma}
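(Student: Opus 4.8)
The plan is to obtain this as an instance of Brown's finiteness criterion, with the Morse Lemma (Lemma~\ref{lem:morse}) bridging the hypothesis on descending links and the homology of the sublevel sets; the assumptions that each $X_m$ is $\calG$-invariant and cocompact and that stabilizers are finite are exactly what make Brown's criterion applicable to the filtration $X = \bigcup_m X_m$.

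First I would extract what Lemma~\ref{lem:morse} gives in this setting. Since ``connected'' is the same as ``$0$-acyclic'', for any reals $N \le m \le M$ every $0$-cube $x$ with $m < h(x) \le M$ has connected descending link, so Lemma~\ref{lem:morse} (with $n = 1$) shows that $X_m \hookrightarrow X_M$ induces an isomorphism on $H_0$ and an epimorphism on $H_1$. The filtration $X = \bigcup_m X_m$ is a nested union of subcomplexes, so homology commutes with the direct limit; since $X$ is $1$-acyclic, $\varinjlim_m \tilde H_0(X_m) = 0$, and combined with the isomorphisms just noted this forces $\tilde H_0(X_m) = 0$ for all $m \ge N$. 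Thus every $X_m$ with $m \ge N$ is connected, and for such $m$ the statement ``$X_m$ is $1$-acyclic'' is equivalent to ``$H_1(X_m) = 0$''.

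For the forward implication I would argue: if $H_1(X_{m_0}) = 0$ for some $m_0 \ge N$, then by the epimorphism statement $H_1(X_M) = 0$ for every $M \ge m_0$, so the cofinal subfiltration $\{X_M\}_{M \ge m_0}$ consists of $1$-acyclic, $\calG$-cocompact complexes with finite stabilizers; Brown's criterion (or, directly, the fact that a group acting cocompactly on a $1$-acyclic complex with stabilizers of type $\FP_\infty$ is of type $\FP_2$, finite groups being $\FP_\infty$) then shows $\calG$ is of type $\FP_2$. For the reverse implication, suppose $\calG$ is of type $\FP_2$. Brown's criterion, applied to the $1$-acyclic complex $X$ with its $\calG$-cocompact filtration $\{X_m\}$ and finite (hence $\FP_\infty$) stabilizers, says the filtration is essentially $1$-acyclic: for each $m$ there is $M \ge m$ with $\tilde H_j(X_m) \to \tilde H_j(X_M)$ the zero map for $j \le 1$. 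Taking $m = N$ and such an $M \ge N$, Lemma~\ref{lem:morse} tells us $H_1(X_m) \to H_1(X_M)$ is moreover surjective; a map that is both zero and surjective has trivial target, so $H_1(X_M) = 0$, and as $X_M$ is also connected it is $1$-acyclic with $M \ge N$, as required.

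I expect the only genuinely non-formal point to be the interaction used in the reverse direction: Brown's criterion delivers only an \emph{essentially} trivial direct system of homology groups, and it is precisely the surjectivity half of Lemma~\ref{lem:morse} that upgrades this to the outright vanishing of $H_1(X_M)$ for a single $M$. Everything else is bookkeeping with the Morse Lemma and with direct limits; the main thing to be careful about is verifying that the hypotheses of Brown's criterion (cocompact filtration, $\FP_\infty$ cell stabilizers, the correct level of acyclicity of $X$) are in place, which they are by assumption.
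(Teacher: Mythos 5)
Your proof is correct and follows essentially the same route as the paper: Brown's criterion supplies both directions (a single cocompact $1$-acyclic $X_m$ with finite stabilizers gives $\FP_2$, and $\FP_2$ gives essential triviality of the filtration's low-degree homology), and the surjectivity part of the Morse Lemma above level $N$ upgrades essential triviality to actual vanishing of $H_1(X_M)$. The only differences are cosmetic: you argue the converse directly rather than by contraposition, and you spell out the $\tilde H_0$/connectedness bookkeeping that the paper leaves implicit.
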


\begin{proof}
 If some $X_m$ is $1$-acyclic then it is of type $\FP_2$ by \cite[Proposition~1.1]{brown87}. In the converse direction, the Morse Lemma implies that the maps $H_1(X_N \to X_m)$ are all surjective for $m \ge N$. Thus if $X_m$ is not $1$-acyclic for any $m$, then the map $H_1(X_n \to X_m)$ cannot be trivial for any $N \le n \le m$ (since the previous map is surjective and factors through this one). Hence the system $H_1(X_n), n \ge N$ is not essentially trivial, and \cite[Theorem~2.2]{brown87} implies that $G$ is not of type $\FP_2$.
\end{proof}

We close the section with a strengthening of the well known nerve lemma. We will only need the case $n=1$, where it is a homological version of \cite[Lemma~6.2]{witzel15}, but having the question from the introduction in mind, we prove the general case. The main point is the surjective morphism at the end of the statement.

\begin{proposition}[Strong nerve lemma]\label{prop:nerve}
 Let a CW complex $X$ be covered by subcomplexes $(X_i)_{i \in I}$, let $L$ be the nerve of the cover and let $n \in \N$ be arbitrary. Assume that if $X_{i_1} \cap \ldots \cap X_{i_r} \ne \emptyset$ then
 \[
 H_q(X_{i_1} \cap \ldots \cap X_{i_r}) = H_q(\text{pt.})
 \]
 for $1 \le r \le n-q$. Then $H_k(X) \cong H_k(L)$ for $k < n$ and there is a surjective morphism $H_n(X) \to H_n(L)$.
\end{proposition}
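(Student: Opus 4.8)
The plan is to build a map of chain complexes between a model computing $H_*(X)$ and a model computing $H_*(L)$, and to control its behavior in degrees $\le n$ using the acyclicity hypotheses on the intersections. Concretely, I would use the Mayer--Vietoris (double) complex associated to the cover: let $C_{p,q} = \bigoplus_{i_0 < \dots < i_p} C_q(X_{i_0} \cap \dots \cap X_{i_p})$ with the Čech differential in the $p$-direction and the singular (or cellular) differential in the $q$-direction, where by convention the summand is zero if the intersection is empty. There are two natural augmentations: augmenting in the $q$-direction (using that each nonempty $X_{i_0}\cap\dots\cap X_{i_p}$ is connected, hence $H_0$ is that of a point) relates the total complex to the simplicial chain complex $C_*(L)$ of the nerve; augmenting in the $p$-direction (using that the $X_i$ cover $X$, so the Čech complex $\bigoplus C_q(X_{i_0}\cap\dots\cap X_{i_p})$ resolves $C_q(X)$) relates it to $C_*(X)$. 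This is the standard setup behind the nerve lemma; the point is to run the spectral-sequence bookkeeping carefully to extract both an isomorphism range and one surjection beyond it.

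The key steps, in order, are as follows. First, fix the double complex $C_{p,q}$ and verify the two augmentations are quasi-isomorphisms onto the respective single complexes: the $p$-augmentation $C_{-1,q} = C_q(X)$ is exact in the $p$-direction because the Čech complex of a cover of chains is acyclic (a standard partition-of-unity / barycentric-subdivision argument, or one simply cites it), and the $q$-augmentation row $C_{p,-1} = C_p(L)$ is exact in the $q$-direction in the relevant range precisely by the hypothesis $H_q(X_{i_0}\cap\dots\cap X_{i_p}) = H_q(\mathrm{pt})$ for $1 \le p+1 \le n-q$, i.e. $q \le n-1-p$. Second, consider the spectral sequence of the double complex filtered so that its $E^1$ (or $E^2$) page in the $q$-direction is $H_q$ of the intersections arranged along the nerve. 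By the hypothesis, the columns with $q \ge 1$ vanish in total degree $p+q \le n$ except that the "diagonal edge" $q = n - p$ may survive (this is the one case where $p+1 = n-q$ fails to be $\le n-q$... more precisely the hypothesis covers $p \le n-1-q$, leaving $p = n-q$ uncontrolled). Hence in total degrees $k \le n-1$ the only surviving contributions come from the $q=0$ row, which computes $H_k(L)$, and this gives $H_k(X) \cong H_k(L)$ for $k \le n-1$, i.e. $k < n$. Third, in total degree $n$ the $q=0$ row still contributes $H_n(L)$, and the filtration gives $H_n(X)$ a subquotient isomorphic to a subobject of this, but reading off the edge homomorphism in the correct direction yields a \emph{surjection} $H_n(X) \twoheadrightarrow H_n(L)$ rather than an isomorphism, because the potentially-nonzero higher-$q$ diagonal terms can only add to $H_n(X)$, not obstruct the projection to the $q=0$ line. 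I would phrase this last point via the edge map of the spectral sequence: $H_n(\mathrm{Tot}) \to E^\infty_{n,0} \hookrightarrow E^2_{n,0} = H_n(L)$ need not be onto in general, so I would instead argue using the other filtration / the explicit comparison map $C_*(X) \leftarrow \mathrm{Tot} \to C_*(L)$ at chain level and a diagram chase to get surjectivity.

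The main obstacle I anticipate is getting the direction and exactness of the edge homomorphism in degree $n$ exactly right — that is, producing an honest \emph{surjection} $H_n(X)\to H_n(L)$ rather than merely a map with controlled kernel and cokernel. The cleanest route is probably to avoid spectral sequences for the borderline degree and instead work directly with the total complex $\mathrm{Tot}(C_{\bullet,\bullet})$ truncated appropriately: show (i) the chain map $\mathrm{Tot} \to C_*(X)$ induced by the $p$-augmentation is a quasi-isomorphism in all degrees (no hypothesis needed, just acyclicity of Čech complexes of covers), and (ii) the chain map $\mathrm{Tot} \to C_*(L)$ induced by the $q$-augmentation is, by the intersection hypotheses, an isomorphism on $H_k$ for $k < n$ and a surjection on $H_n$ — the latter because the obstruction to lifting an $n$-cycle of $L$ through the augmentation lives in $H_q$ of an intersection with $q \le n - 1 - p$ for the relevant $p$, which vanishes, while the obstruction to \emph{injectivity} in degree $n$ would live in an uncontrolled group. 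Composing (i)${}^{-1}$ with (ii) then gives the statement. I would check the bookkeeping of which $(p,q)$ indices the hypothesis " $1\le r\le n-q$ " actually controls (here $r = p+1$) against exactly which groups appear as obstructions in degrees $<n$ versus $=n$; this indexing check is the only genuinely delicate part, the rest being the standard two-augmentations-on-a-Mayer--Vietoris-double-complex argument.
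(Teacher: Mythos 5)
Your proposal is correct, and at bottom it is the paper's argument: your Mayer--Vietoris double complex is exactly what underlies the spectral sequence $E^2_{p,q}=H_p(L;\frakh_q)\Rightarrow H_*(X)$ that the paper cites from Davis, and your index bookkeeping ($r=p+1$, hypothesis used in the range $q\le n-1-p$) matches the paper's vanishing region. Where the executions genuinely differ is the bottom row: the paper's spectral sequence yields $H_*(L;\frakh_0)$ with the coefficient system $\frakh_0$ rather than constant coefficients, so it needs a second step comparing $H_*(L;\frakh_0)$ with $H_*(L)$ in degrees $n-1$ and $n$ (a real issue, since intersections of more than $n$ of the $X_i$ need not be connected), carried out via an explicit surjective map of chain complexes. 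Your chain-level zig-zag through the augmentation $\operatorname{Tot}\to C_*(L)$ reaches constant coefficients directly --- the lifting obstructions are $q$-cycles in $(n-q)$-fold intersections, exactly the groups hypothesized to vanish --- so it subsumes that comparison and avoids citing the spectral sequence, at the cost of writing out the chase. Two minor remarks: your worry about the edge homomorphism is unnecessary, since every differential out of position $(n,0)$ lands in $E^2_{n-r,r-1}=H_{n-r}(L;\frakh_{r-1})$, which already vanishes because $\frakh_{r-1}$ is zero on simplices of dimension at most $n-r$; and your opening assertion that every nonempty intersection is connected is only guaranteed for at most $n$-fold intersections --- your later restriction to the correct range, which is all your chase actually uses, makes this a slip of phrasing rather than a gap.
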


\begin{proof}
 The nerve is the simplicial complex of those $T \subseteq I$ with $X_T \defeq \bigcap_{i \in T} X_i \ne \emptyset$. It is equipped with the coefficient systems $\frakh_q \colon T \to H_q(X_T)$. We use the spectral sequence from \cite[Theorem~E.3.2]{davis08}:
\[
E^2_{p,q} = H_p(L;\frakh_q) \Rightarrow H_*(X)\text{.}
\]
Our conditions ensure that $\frakh_q(T) = 0$ provided $T$ has dimension at most $n-q-1$ for $q > 0$.
This means that
\begin{equation}
\label{eq:spec_seq}
E^2_{p,q} =
\left\{
\begin{array}{ll}
H_p(L; \frakh_0) & q = 0 \text{ and } 0 \le p \le n\\
0 & 1 \le q \le n - p - 1\text{.}
\end{array}
\right.
\end{equation}
Since the spectral sequence lies in the first quadrant, the region $p + q \le n - 1$ of \eqref{eq:spec_seq} remains stable, which tells us that $H_p(X) \cong H_p(L;\frakh_0)$ for $0 \le p \le n-1$ and that there is an epimorphism $H_n(X) \to H_n(L;\frakh_0)$.

Thus it remains to show that $H_p(L;\frakh_0) \cong H_p(L)$ for $0 \le p \le n-1$ and to produce an epimorphism $H_n(L;\frakh_0) \to H_n(L)$. We claim that the map of coefficient systems $\frakh_0 \to \Z$ (here the codomain is the coefficient system that is constant $\Z$) induced by contracting all $X_T$ to a point works. To spell this out let $(C_p)_{p \in \N}$ be the chain complex of $L$, i.e.\ $C_p \cong \bigoplus_{T \in \Sigma_p} \Z$ where $\Sigma_p$ is the set of $p$-simplices of $L$. We think of the generator of the summand corresponding to $T$ as $X_T$. Let $\tilde{C}_p$ be the chain complex of $L$ with coefficients in $\frakh_0$, i.e.\ $\tilde{C}_p \cong \bigoplus_{T \in \Sigma_p} H_0(X_T)$. Our assumptions ensure that $H_0(X_T) = \Z$ for $T$ of dimension at most $n-1$, so we can identify $C_p$ with $\tilde{C}_p$ for $p \le n-1$. It follows that $H_p(L) \cong H_p(L;\frakh_0)$ for $p \le n-2$. It is also easy to see that the images of $C_n \stackrel{\partial}{\to} C_{n-1}$ and of $\tilde{C}_n \stackrel{\partial}{\to} C_{n-1}$ are the same so $H_{n-1}(L) \cong H_{n-1}(L;\frakh_0)$.

In general (to cover degree $n$), we look at the map of chain complexes $\psi_* \colon \tilde{C}_* \to C_*$ taking each component of $X_T$ to $X_T$. The key point is that this is actually a map of chain complexes, in particular that the square

\vspace{-\medskipamount}
\begin{center}
\begin{tikzpicture}[scale=2]
\node (ul) at (0,0) {$\tilde{C}_{n+1}$};
\node (ur) at (1,0) {$\tilde{C}_{n}$};
\node (ll) at (0,-1) {$C_{n+1}$};
\node (lr) at (1,-1) {$C_{n}$};
\path (ul) edge[->,above] node {$\partial$} (ur);
\path (ul) edge[->,left] node {$\psi_{n+1}$} (ll);
\path (ur) edge[->, left] node {$\psi_n$} (lr);
\path (ll) edge[->, above] node {$\partial$} (lr);
\end{tikzpicture}
\end{center}
\vspace{-\medskipamount}
commutes. This comes from the fact that if $f \colon X_U \to X_T$ is continuous (inclusion in our case) then the diagram
\vspace{-\medskipamount}
\begin{center}
\begin{tikzpicture}[xscale=2.5, yscale=2]
\node (ul) at (0,0) {$H_0(X_U)$};
\node (ur) at (1,0) {$H_0(X_T)$};
\node (ll) at (0,-1) {$H_0(\text{pt.})$};
\node (lr) at (1,-1) {$H_0(\text{pt.})$};
\path (ul) edge[->,above] node {$H_0(f)$} (ur);
\path (ul) edge[->] (ll);
\path (ur) edge[->] (lr);
\path (ll) edge[->] (lr);
\end{tikzpicture}
\end{center}
\vspace{-\medskipamount}
commutes. The rest is easy: any $c \in C_n$ is the image under $\psi_n$ of some $\tilde{c} \in \tilde{C}_n$ and if $\tilde{c} = \partial \tilde{d}$ then $c = \partial \psi_{n+1}(\tilde{d}) \eqdef \partial d$. Thus $\psi_*$ induces an epimorphism in degree $n$.
\end{proof}

\begin{corollary}
\label{cor:nerve}
Let a CW complex $X$ be covered by connected subcomplexes. If the nerve of the cover is not $1$-acyclic then neither is $X$.\qed
\end{corollary}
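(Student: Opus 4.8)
The plan is to obtain this as the case $n = 1$ of the Strong nerve lemma (Proposition~\ref{prop:nerve}). The first step is to verify that the hypothesis of that proposition holds in this setting. For $n = 1$ and $q \ge 1$ the condition on $H_q(X_{i_1} \cap \cdots \cap X_{i_r})$ is vacuous, since it would require $1 \le r \le n - q \le 0$; and for $q = 0$ it only constrains the single sets $X_i$ (the case $r = 1$), asking that $H_0(X_i) = H_0(\mathrm{pt.}) = \Z$ whenever $X_i \ne \emptyset$. That is exactly the assumption that each member of the cover is connected. So Proposition~\ref{prop:nerve} applies with $n = 1$ and yields $H_0(X) \cong H_0(L)$ together with a surjection $H_1(X) \twoheadrightarrow H_1(L)$, where $L$ denotes the nerve of the cover.

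The second step is a contrapositive argument using the fact that a space is $1$-acyclic precisely when its reduced homology vanishes in degrees $0$ and $1$. Assume $L$ is not $1$-acyclic. If $H_1(L) \ne 0$, then surjectivity of $H_1(X) \to H_1(L)$ forces $H_1(X) \ne 0$, so $X$ is not $1$-acyclic. Otherwise $\widetilde{H}_0(L) \ne 0$, i.e.\ $L$ has at least two path components; then $H_0(X) \cong H_0(L)$ has rank at least two, so $X$ likewise has at least two path components, whence $\widetilde{H}_0(X) \ne 0$. Either way $X$ fails to be $1$-acyclic, which is the assertion.

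I do not anticipate any real obstacle here: the statement is essentially a bookkeeping specialization of Proposition~\ref{prop:nerve}, the only points meriting a moment's thought being the translation of ``$H_0(X_i) = \Z$'' into ``$X_i$ is connected'' and the passage from $H_0$ to $\widetilde{H}_0$ (for which it suffices that the cover, and hence $X$, is nonempty; the trivial case of an empty cover can be excluded, since empty intersections are not simplices of the nerve in any case).
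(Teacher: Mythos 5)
Your argument is correct and is exactly the intended one: the paper leaves the corollary as an immediate consequence (marked \qed) of Proposition~\ref{prop:nerve} with $n=1$, and your verification that the hypotheses reduce to connectedness of the cover members, together with the contrapositive using $H_0(X)\cong H_0(L)$ and the surjection $H_1(X)\to H_1(L)$, fills in precisely those routine details. No gaps.
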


\subsection{$\CAT(0)$ cube complexes}\label{sec:cat0}

In this brief subsection we collect some terminology and results regarding $\CAT(0)$ cube complexes, all of which comes for example from \cite{haglund08}.

Let $X$ be a $\CAT(0)$ cube complex. A \emph{midcube} is the subset of $[-1,1]^n$ obtained by restricting some coordinate to $0$. The midcubes of $X$ form a new cube complex and each of its components is a \emph{hyperplane} in $X$. Each hyperplane is naturally embedded in $X$. A \emph{wall} of $X$ is a parallel class of oriented $1$-cubes of $X$. We will also use the term \emph{wall} to denote the cube complex whose $0$-cubes are those $1$-cubes, whose $1$-cubes are the corresponding $2$-cubes of $X$, and so forth. There is a natural $2$-to-$1$ correspondence between walls and hyperplanes and each wall is isomorphic to its corresponding hyperplane. If $c$ is a cube and $H$ is a wall containing a $1$-face of $c$ we say that $H$ \emph{cuts through} $c$. The $1$-faces of a $k$-cube lie in precisely $k$ walls. Given a hyperplane $H$, any subcomplex of $X$ supported on all the $0$-cubes in a connected component of $X\setminus H$ is called a \emph{half-space}. In fact each wall $H$ determines two half-spaces and we denote them $H^+$ and $H^-$.


\section{Proof}\label{sec:proof}

In this section we prove our main result, that the Basilica Thompson group $T_B$ is not finitely presented. The main technical lemma that is specific to the Basilica rewriting system is Lemma~\ref{lem:empty_sublevel}. The Lemmas~\ref{lem:nonempty_quarterspaces} and~\ref{lem:conn_halfspaces} should readily generalize to other setups. The rest of the section is general combinatorial topology which is conveniently phrased in the setup of cube complexes.

\begin{remark}
 We think that it would only take minor modifications (mostly to Lemma~\ref{lem:empty_sublevel}) to prove that, for example, the rearrangement groups for the rabbit Julia set and its variants are not finitely presented (see \cite{belk15a} for background). Thus a positive answer to the question posed initially will not come from Julia sets whose parameter is in a bulb of the Mandelbrot set adjacent to the main cardioid.
\end{remark}

Using Lemma~\ref{lem:fin_iff_conn} our main task is to show that certain sublevel sets are not $1$-acyclic. For that purpose we will use the following criterion.

\begin{lemma}\label{lem:two_walls}
 Let $X$ be a $\CAT(0)$ cube complex, let $Y$ be a subcomplex, and let $H_1,H_2$ be two walls in $X$. Assume that $H_1^\delta \cap Y$ and $H_2^\delta \cap Y$ are connected for $\delta\in\{+,-\}$, that $H_1^{\delta_1} \cap H_2^{\delta_2} \cap Y$ are all non-empty for $\delta_1,\delta_2\in\{+,-\}$, and that $H_1\cap H_2\cap Y$ is empty. Then $Y$ is not $1$-acyclic.
\end{lemma}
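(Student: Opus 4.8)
The plan is to use a Mayer--Vietoris argument on the four quarter-spaces cut out by $H_1$ and $H_2$ inside $Y$. Write $Y^{\delta_1,\delta_2} \defeq H_1^{\delta_1} \cap H_2^{\delta_2} \cap Y$ for $\delta_1,\delta_2 \in \{+,-\}$, and note first that since $H_1 \cap H_2 \cap Y = \emptyset$, no cube of $Y$ is cut through by both $H_1$ and $H_2$; consequently every cube of $Y$ lies entirely on one side of at least one of the two walls, and in particular $Y$ is covered by the two half-spaces $H_1^+ \cap Y$ and $H_1^- \cap Y$ (as subcomplexes), whose intersection is exactly the wall subcomplex $(H_1 \cap Y)$-neighbourhood --- more precisely, since $H_1 \cap H_2 \cap Y = \emptyset$, the ``carrier'' of $H_1$ in $Y$ decomposes along $H_2$ into the disjoint pieces sitting in $H_2^+$ and $H_2^-$. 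The cleanest way to package this is to observe that $Y$ is the union of the two connected subcomplexes $A \defeq H_1^+ \cap Y$ and $B \defeq H_1^- \cap Y$, and that $A \cap B$ is the subcomplex of $Y$ consisting of cubes cut through by $H_1$; this subcomplex is itself split by $H_2$ into two nonempty pieces (nonempty because $Y^{+,\pm}$ and $Y^{-,\pm}$ are all nonempty, so $H_1$ does separate points on each side of $H_2$) which lie in different components since $H_1 \cap H_2 \cap Y = \emptyset$. Hence $A \cap B$ is disconnected, i.e.\ $H_0(A \cap B)$ has rank at least $2$.

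**Key steps.** Now run reduced Mayer--Vietoris for the cover $Y = A \cup B$:
\[
H_1(Y) \to \tilde{H}_0(A \cap B) \to \tilde{H}_0(A) \oplus \tilde{H}_0(B) \to \tilde{H}_0(Y).
\]
By hypothesis $A = H_1^+ \cap Y$ and $B = H_1^- \cap Y$ are connected, so $\tilde{H}_0(A) = \tilde{H}_0(B) = 0$. Therefore the map $\tilde{H}_0(A \cap B) \to \tilde{H}_0(A) \oplus \tilde{H}_0(B)$ is zero, and exactness forces $H_1(Y) \to \tilde{H}_0(A \cap B)$ to be surjective. Since $\tilde{H}_0(A \cap B) \ne 0$ (as $A \cap B$ is disconnected), we conclude $H_1(Y) \ne 0$, so $Y$ is not $1$-acyclic. (One could equally phrase the whole thing through Corollary~\ref{cor:nerve} by covering $Y^{+,+} \cup Y^{+,-}$ appropriately, but the direct Mayer--Vietoris argument with the two $H_1$-halfspaces is the most economical.)

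**Main obstacle.** The routine-seeming but genuinely load-bearing step is justifying the two geometric facts about $\CAT(0)$ cube complexes: (1) that $Y$ really is covered by the two subcomplexes $H_1^+ \cap Y$ and $H_1^- \cap Y$ — this uses that $H_1 \cap H_2 \cap Y = \emptyset$ implies no cube of $Y$ can meet both half-spaces of $H_1$ strictly unless... actually one must be careful: a cube can be cut by $H_1$, in which case it lies in neither open half-space, so the correct statement is that each cube lies in $H_1^+ \cap Y$ or in $H_1^- \cap Y$ as \emph{closed} subcomplexes, which holds by definition of half-space and the fact that a cube not cut by $H_1$ lies on one side; and (2) that $A \cap B$, the subcomplex of cubes cut by $H_1$, is disconnected — here the input ``$H_1^{\delta_1}\cap H_2^{\delta_2}\cap Y$ all nonempty'' gives $1$-cubes of $Y$ dual to $H_1$ on both sides of $H_2$, while ``$H_1 \cap H_2 \cap Y = \emptyset$'' prevents any cube cut by $H_1$ from straddling $H_2$, hence $A \cap B \subseteq (H_2^+ \cap Y) \sqcup (H_2^- \cap Y)$ as a disjoint union of two nonempty pieces. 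Getting these two combinatorial claims stated precisely (in terms of carriers of hyperplanes and the standard structure theory from \cite{haglund08}) is where the real work lies; the homological algebra afterwards is immediate.
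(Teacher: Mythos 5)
There are two genuine problems with your Mayer--Vietoris setup. First, with the paper's definition of half-space (the subcomplex supported on the $0$-cubes of one component of $X\setminus H_1$), the two pieces $A=H_1^+\cap Y$ and $B=H_1^-\cap Y$ do \emph{not} cover $Y$: any cube of $Y$ cut by $H_1$ has vertices on both sides and hence lies in neither half-space, and moreover $A\cap B=\emptyset$ always, not ``the subcomplex of cubes cut through by $H_1$.'' Your attempted fix (``each cube lies in $H_1^+\cap Y$ or $H_1^-\cap Y$ as closed subcomplexes, by definition of half-space'') is exactly what fails for the cubes dual to $H_1$. To make a two-piece decomposition work you must enlarge $A$ and $B$ by the carrier of $H_1$ inside $Y$ (the cubes of $Y$ cut by $H_1$ together with their faces), check that the enlarged pieces are still connected (this follows from connectivity of $H_1^\pm\cap Y$ but needs saying), and identify $A\cap B$ with that carrier.

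Second, and more seriously, your justification that $A\cap B$ is disconnected is wrong: you claim that nonemptiness of the four quarter-spaces ``gives $1$-cubes of $Y$ dual to $H_1$ on both sides of $H_2$,'' but nonempty quarters only supply \emph{vertices}, not dual edges. Indeed your argument never uses the hypothesis that $H_2^+\cap Y$ and $H_2^-\cap Y$ are connected, and without it the conclusion you derive is false: take $X$ a single square with walls $H_1,H_2$, and $Y$ the two opposite closed edges dual to $H_2$; then all four quarters are nonempty, $H_1^\pm\cap Y$ are connected, no cube of $Y$ is cut by both walls, there is no edge of $Y$ dual to $H_1$ at all, and $H_1(Y)=0$. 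So any proof omitting the $H_2$-half-space connectivity cannot show $H_1(Y)\neq 0$. The repair is to use that hypothesis: since $H_2^{\pm}\cap Y$ is connected and contains vertices of both $H_1^+$ and $H_1^-$ (the nonempty quarters), a path inside $H_2^{\pm}\cap Y$ must cross $H_1$, producing an edge of $Y$ dual to $H_1$ on each side of $H_2$; since no cube of $Y$ is cut by both walls, the carrier of $H_1$ in $Y$ splits into nonempty pieces in $H_2^+$ and $H_2^-$, and then your Mayer--Vietoris computation goes through. Note that the paper avoids these carrier issues altogether by covering $Y$ with the \emph{four} subcomplexes $H_i^{\delta}\cap Y$ (every cube of $Y$ misses at least one of the two walls, hence lies in a half-space of that wall), whose nerve is $S^0*S^0\cong S^1$, and applying Corollary~\ref{cor:nerve}; that argument is symmetric in $H_1,H_2$ and uses each hypothesis exactly where you would expect.
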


\begin{proof}
 The assumption that $H_1\cap H_2\cap Y$ is empty means that $Y$ contains no cube through which $H_1$ and $H_2$ both cut, which implies that $Y$ is covered by the subcomplexes $H_i^\delta \cap Y$, for $i\in\{1,2\}$ and $\delta\in\{+,-\}$. The nerve of this cover is $S^0 * S^0 \cong S^1$, by the assumption that the $H_1^{\delta_1} \cap H_2^{\delta_2} \cap Y$ are all non-empty. Since we are also assuming that each $H_i^\delta \cap Y$ is connected, and since the nerve is not $1$-acyclic, the result follows from Corollary~\ref{cor:nerve}.
\end{proof}

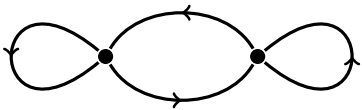
\begin{figure}[htb]
\begin{tikzpicture}[scale=2, very thick]
\begin{scope}[every node/.style={circle, fill, inner sep=2pt}]
\node[label=below:{}] (x) at (0,0) {};
\node[label=below:{}] (y) at (1,0) {};
\end{scope}
\begin{scope}
\path (x) edge[->-,out=180-\loopangle, in = 180+\loopangle, min distance = 1cm, looseness=5, left] node {} (x);
\path (y) edge[->-,out=0-\loopangle, in = 0+\loopangle, min distance = 1cm, looseness=5, right] node {} (y);
\path (x) edge[->-,out=0-\picangle, in = 180+\picangle, below] node {} (y);
\path (y) edge[->-,out=180-\picangle, in = 0+\picangle, above] node {} (x);
\end{scope}
\end{tikzpicture}
\caption{The graph $G_0$.}
\label{graph:G_0}
\end{figure}

Now consider the cube complex $K(G_0,e\to R)$, for $G_0$ the graph in Figure~\ref{graph:G_0} and $e\to R$ the Basilica replacement rule from Example~\ref{ex:T_B}. For the rest of this section, we will always be using this rewriting rule, so we omit it from the notation. Let $h\colon K(G_0)\to \R$ be the Morse function induced by the rank. To prove that $T_B$ is not of type $\FP_2$, Lemma~\ref{lem:fin_iff_conn} says it suffices to show that for $m$ large enough, no $K(G_0)_m$ is $1$-acylic. We will apply Lemma~\ref{lem:two_walls}, namely, we will find two walls in $K(G_0)$ satisfying the requirements for $Y = K(G_0)_m$.

First we claim that walls in $K(G_0)$ are isomorphic to cube complexes of the form $K(G)$, for appropriate $G$.

\begin{lemma}[Modeling walls]\label{lem:model_wall}
 Let $G_1$ be a graph, let $\varepsilon$ an edge of $G_1$, and let $G_2=G_1\lhd \varepsilon$. Let $f \colon X(G_0) \to X(G_1)$ and $g =\Delta_\varepsilon \circ f \colon X(G_0) \to X(G_2)$ be rearrangements, and let $c$ be the $1$-cube spanned by $[f]$ and $[g]$. Then the wall of $c$ is isomorphic to the complex $K(G_1 \setminus \varepsilon)$.
\end{lemma}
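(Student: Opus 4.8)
The plan is to identify the wall of $c$ combinatorially and then write down an explicit cubical isomorphism onto $K(G_1\setminus\varepsilon)$. Recall that the $0$-cubes of the wall are the $1$-cubes of $K(G_0)$ parallel to $c$, that its $k$-cubes are the $(k+1)$-cubes of $K(G_0)$ cut through by the wall, and that parallelism of $1$-cubes is generated by declaring opposite $1$-faces of a $2$-cube parallel. Every $1$-cube of $K(G_0)$ has the form $\{[g],[\Delta_{\delta}\circ g]\}$ for a rearrangement $g\colon X(G_0)\to X(G_g)$ and an edge $\delta\in E(G_g)$; call $\delta$ its \emph{direction edge}. The crux of the proof will be the following description of the wall: a $1$-cube $\{[g],[\Delta_{\delta}\circ g]\}$ is parallel to $c$ if and only if the \emph{transition rearrangement} $g\circ f^{-1}\colon X(G_1)\to X(G_g)$ restricts on the cell $C(\varepsilon)$ to a canonical homeomorphism onto $C(\delta)$.

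To prove this description I would argue by induction along a gallery of $2$-cubes realizing a parallelism. In a $2$-cube $([g''],\{\alpha,\beta\})$ the two opposite $1$-faces in direction $\alpha$ are $\{[g''],[\Delta_\alpha g'']\}$ and $\{[\Delta_\beta g''],[\Delta_{\{\alpha,\beta\}}g'']\}$; since $\beta\ne\alpha$, expanding or contracting at $\beta$ leaves the edge $\alpha$ untouched, so both faces have the \emph{same} direction edge $\alpha$, and the corresponding transition rearrangements differ only by post-composition with a simple expansion or contraction away from $\alpha$, which is the identity on $C(\alpha)$. Hence along the gallery the direction edge is transported canonically and the transition rearrangement stays canonical on $C(\varepsilon)$. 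Conversely, if $g\circ f^{-1}$ is canonical on $C(\varepsilon)$ onto $C(\delta)$, one produces a common expansion of $[f]$ and $[g]$ by applying to $f$ the domain expansion of a graph pair diagram of $g\circ f^{-1}$ that does not expand $\varepsilon$ — such a diagram exists because $g\circ f^{-1}$ is already canonical on $C(\varepsilon)$ — and sliding the $1$-cube $c$ up this expansion, step by step always at an edge $\ne\varepsilon$, produces a parallelism to $\{[g],[\Delta_\delta g]\}$. In particular, since two distinct $1$-cubes at a common $0$-cube are never parallel in a $\CAT(0)$ cube complex, the direction edge $\varepsilon_{[g]}$ of a $1$-cube of the wall is determined by $[g]$, and it is matched with $\varepsilon$ under the identification induced by $g\circ f^{-1}$.

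Next I would define the isomorphism. Given a $0$-cube $\{[g],[\Delta_{\varepsilon_{[g]}}g]\}$ of the wall, choose a graph pair diagram $(E_-,E_+,\varphi)$ of $g\circ f^{-1}$ that does not expand $\varepsilon$; then $\varepsilon\in E(E_-)$ and $\varphi(\varepsilon)=\varepsilon_{[g]}$, so deleting these edges yields a graph pair diagram $(E_-\setminus\varepsilon,\,E_+\setminus\varepsilon_{[g]},\,\varphi|)$ of a rearrangement $X(G_1\setminus\varepsilon)\to X(G_g\setminus\varepsilon_{[g]})$; let $\Theta([g])$ be its range-equivalence class. I would then check that $\Theta([g])$ is independent of the chosen diagram (any two admissible diagrams have a common refinement, and the extra simple expansions away from $\varepsilon$ cancel) and of the representative $g$ (changing $g$ by a base isomorphism changes the output by a base isomorphism), and that $\Theta(c)=[\id]$. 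For the inverse, given a $0$-cube of $K(G_1\setminus\varepsilon)$ presented by a graph pair diagram over $G_1\setminus\varepsilon$, re-insert the edge $\varepsilon$ into both graphs of the diagram, matched identically; this produces a rearrangement of $X(G_1)$ that is canonical on $C(\varepsilon)$, and post-composing $f$ with it lands on the wall. These two constructions are visibly mutually inverse on $0$-cubes. Finally, a $k$-cube of the wall is a $(k+1)$-cube $([g],S)$ of $K(G_0)$ with $\varepsilon_{[g]}\in S$; sending it to $(\Theta([g]),\,S\setminus\varepsilon_{[g]})$ — using the identification $E(G_g\setminus\varepsilon_{[g]})=E(G_{h})$ for $h$ the representative of $\Theta([g])$ constructed above — gives a bijection on cubes of every dimension that respects faces, hence the desired cubical isomorphism.

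The step I expect to be the main obstacle is the combinatorial description of the wall in the first two paragraphs: getting the bookkeeping right so that the direction edge really is transported canonically along an \emph{arbitrary} gallery of $2$-cubes, and that ``deleting $\varepsilon$ everywhere'' turns the expansion/contraction structure visible along the wall into precisely that of $K(G_1\setminus\varepsilon)$. This is where the fact that $K(G_0)$ is a genuine $\CAT(0)$ cube complex — so that parallel $1$-cubes really are joined by $2$-cube galleries, and each square can be analyzed individually — is essential.
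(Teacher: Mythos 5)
Your proof is correct and takes essentially the same approach as the paper's: identify each $1$-cube of the wall via the transition rearrangement from $[f]$, observe that parallelism forces this transition to be built from expansions and contractions away from $\varepsilon$, and then restrict away the edge $\varepsilon$ to land in $K(G_1\setminus\varepsilon)$. The paper asserts the parallelism claim with ``we know'' and the isomorphism with ``it is easily seen,'' while you fill in those steps with the $2$-cube gallery induction and the explicit diagram bookkeeping for $\Theta$ and its inverse.
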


\begin{proof}
 Let $H$ be the wall of $c$ in $K(G_0)$, so the $0$-cubes of $H$ are the $1$-cubes of $K(G_0)$ parallel to $c$. Let $d$ be a such a $1$-cube of $H$. Say $d$ has $0$-faces $[f']$ and $[g']$, with $g' = \Delta_\varepsilon \circ f'$. Since $d$ is parallel to $c$, we know that $f'=\phi\circ f$ for some rearrangement $\phi \colon X(G_1)\to X(G')$, and that $\phi$ is a composition of expansions and contractions that never involve the edge $\varepsilon$.
 
 Now, with all the above data for $0$-cubes $d$ in $H$, define a map $\Psi \colon H^{(0)} \to K(G_1 \setminus \varepsilon)^{(0)}$ by
 \[
 \Psi(d) \defeq [\phi|_{K(G_1 \setminus \varepsilon)}] \text{.}
 \]
 Here $\phi|_{K(G_1 \setminus \varepsilon)}$ is, as the notation implies, the restriction of $\phi$ to a rearrangement $X(G_1\setminus \varepsilon)\to X(G'\setminus \varepsilon)$. This is well defined since $\phi$ is a composition of expansions and contractions that never involve $\varepsilon$. It is easily seen that $\Psi$ is bijective, and that it extends to a cubical isomorphism $\Psi\colon H\to K(G_1 \setminus \varepsilon)$.
\end{proof}

Our specific walls of interest arise as follows. Consider the family of graphs $J_n$ with $n+5$ vertices and $2n+10$ edges indicated in Figure~\ref{fig:center_vertex}.

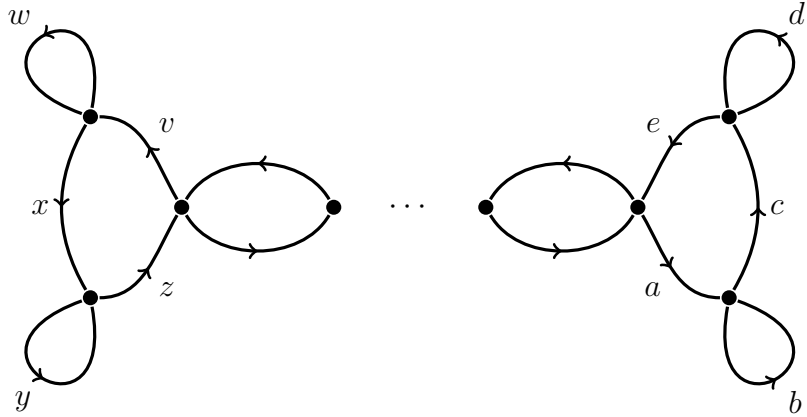
\begin{figure}[htb]
\begin{center}
\begin{tikzpicture}[scale=2, very thick]
\begin{scope}[every node/.style={circle, fill, inner sep=2pt}]
\node (oo) at (1,0) {};
\node (o) at (0,0) {};
\node (p) at (-.6,.6) {};
\node (q) at (-.6,-.6) {};
\node (kk) at (2,0) {};
\node (k) at (3,0) {};
\node (m) at (3.6,.6) {};
\node (l) at (3.6,-.6) {};
\end{scope}
\node at (1.5,0) () {$\cdots$};
\begin{scope}
\path (o) edge[->-,out=0-\picangle, in = 180+\picangle, below] (oo);
\path (oo) edge[->-,out=180-\picangle, in = 0+\picangle, below] (o);
\path (o) edge[->-,out=180-\picangle, in = 60-\picangle, above right] node {$v$} (p);
\path (p) edge[->-,out=120-\loopangle, in = 120+\loopangle, min distance = 1cm, looseness=5, above left] node {$w$} (p);
\path (p) edge[->-,out=180+\picangle, in = 180-\picangle, left] node {$x$} (q);
\path (q) edge[->-,out=240-\loopangle, in = 240+\loopangle, min distance = 1cm, looseness=5, below left] node {$y$} (q);
\path (q) edge[->-,out=300+\picangle, in = 180+\picangle, below right] node {$z$} (o);

\path (kk) edge[->-,out=0-\picangle, in = 180+\picangle, below] (k);
\path (k) edge[->-,out=180-\picangle, in = 0+\picangle, below] (kk);
\path (k) edge[->-,out=0-\picangle, in = 240-\picangle, below left] node {$a$} (l);
\path (l) edge[->-,out=300-\loopangle, in = 300+\loopangle, min distance = 1cm, looseness=5, below right] node {$b$} (l);
\path (l) edge[->-,out=0+\picangle, in = 0-\picangle, right] node {$c$} (m);
\path (m) edge[->-,out=60-\loopangle, in = 60+\loopangle, min distance = 1cm, looseness=5, above right] node {$d$} (m);
\path (m) edge[->-,out=120+\picangle, in = 0+\picangle, above left] node {$e$} (k);
\end{scope}
\end{tikzpicture}
\end{center}
\caption{The family of graphs $J_n, n \in \N$ with various edges labeled. The graph $J_n$ has $n+5$ vertices and $2n+10$ edges.}
\label{fig:center_vertex}
\end{figure}

Fix a vertex $[f_n]$ of $K(G_0)$ represented by a rearrangement $X(G_0)\to X(J_n)$. These exist because $J_n$ is an expansion of $G_0$. Let $c_n$ be the $2$-cube whose $0$-faces are $[f_n]$, $[\nabla_{c,d,e}\circ f_n]$, $[\nabla_{x,y,z}\circ f_n]$ and $[\nabla_{c,d,e}\circ \nabla_{x,y,z} \circ f_n]$. Let $H_1$ be the hyperplane that cuts through the $1$-cube from $[f_n]$ to $[\nabla_{x,y,z}\circ f_n]$ and $H_2$ the hyperplane that cuts through the $1$-cube from $x_n$ to $[\nabla_{c,d,e}\circ f_n]$. For $i=1,2$, let $H_i^+$ be the half-spaces containing $[f_n]$ and let $H_i^-$ be the half-spaces that do not contain $[f_n]$.

By Lemma~\ref{lem:model_wall}, the intersection $H_1\cap H_2$ is isomorphic to $K(O_n)$, where $O_n$ is the graph given in Figure~\ref{fig:O_n}.

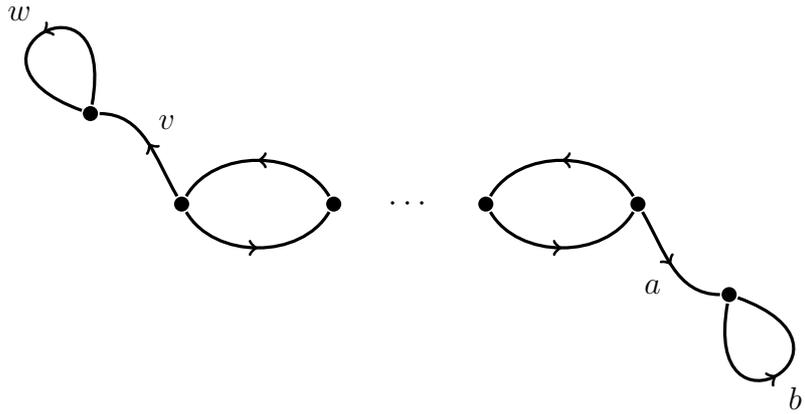
\begin{figure}[htb]
\begin{center}
\begin{tikzpicture}[scale=2, very thick]
\begin{scope}[every node/.style={circle, fill, inner sep=2pt}]
\node (oo) at (1,0) {};
\node (o) at (0,0) {};
\node (p) at (-.6,.6) {};
\node (kk) at (2,0) {};
\node (k) at (3,0) {};
\node (l) at (3.6,-.6) {};
\end{scope}
\node[label=below left :{$$}] (q) at (-.6,-.6) {};
\node (m) at (3.6,.6) {};
\node at (1.5,0) () {$\cdots$};
\begin{scope}
\path (o) edge[->-,out=0-\picangle, in = 180+\picangle, below] (oo);
\path (oo) edge[->-,out=180-\picangle, in = 0+\picangle, below] (o);
\path (o) edge[->-,out=180-\picangle, in = 60-\picangle, above right] node {$v$} (p);
\path (p) edge[->-,out=120-\loopangle, in = 120+\loopangle, min distance = 1cm, looseness=5, above left] node {$w$} (p);

\path (kk) edge[->-,out=0-\picangle, in = 180+\picangle, below] (k);
\path (k) edge[->-,out=180-\picangle, in = 0+\picangle, below] (kk);
\path (k) edge[->-,out=0-\picangle, in = 240-\picangle, below left] node {$a$} (l);
\path (l) edge[->-,out=300-\loopangle, in = 300+\loopangle, min distance = 1cm, looseness=5, below right] node {$b$} (l);
\end{scope}
\end{tikzpicture}
\end{center}
\caption{The family of graphs $O_n, n \in \N$. The graph $O_n$ has $n+3$ vertices and $2n+4$ edges.}
\label{fig:O_n}
\end{figure}

The following is our key technical lemma, and is the result that is most specific to the case of the Basilica rewriting system.

\begin{lemma}[Key technical lemma]\label{lem:empty_sublevel}
 Let $n \in \N$ be arbitrary. Let $O_n$ be the graph on $n + 3$ vertices and $2n+4$ edges indicated in Figure~\ref{fig:O_n}. Then the sublevel complex $K(O_n)_{2n+3}$ is empty.
\end{lemma}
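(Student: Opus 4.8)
The plan is to show that any rearrangement $g \colon X(G_0) \to X(E)$ whose range graph $E$ is an expansion of $O_n$ must have rank $\mu(g) = |E(E)| \ge 2n+4$, which says precisely that $K(O_n)_{2n+3}$ contains no $0$-cubes, hence is empty. The essential point is that $O_n$ itself already has $2n+4$ edges, so it suffices to prove that \emph{no expansion of $O_n$ is isomorphic to an expansion of $G_0$ with fewer than $2n+4$ edges}; since contractions only decrease the edge count and the minimal edge count among all graphs in the groupoid component is what governs the bottom sublevel set, the real content is that $O_n$ admits \emph{no nontrivial contraction at all} within the replacement system — equivalently, no graph $E'$ with $E' \lhd \varepsilon = O_n$ (up to isomorphism) exists. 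If $O_n$ cannot be contracted, then every expansion of $O_n$ has at least $2n+4$ edges, and since a $0$-cube of $K(O_n)$ of rank $\le 2n+3$ would be represented by a rearrangement $X(O_n) \to X(E)$ with $|E(E)| \le 2n+3$ lying in the same groupoid component, and $E$ would then have to be a common expansion-contraction relative of $O_n$ with too few edges, we get a contradiction.

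The key structural observation about the Basilica rule $e \to R$ is what a contractible configuration looks like: to perform a simple contraction on a graph $E$, one must locate a copy of $R$ sitting inside $E$, i.e.\ two vertices $v', w'$ joined by the $R$-pattern $v' \xrightarrow{1} u \xrightarrow{3} w'$ together with a loop $u \xrightarrow{2} u$ at the middle vertex $u$, where moreover (by the expanding hypothesis and the addressing conventions) $u$ has degree exactly $4$ in $E$ — it meets only the three edges $1,2,3$ of this $R$-copy — and $v', w'$ are required to be distinct and to not already share an edge. So I would go through the vertices of $O_n$ (see Figure~\ref{fig:O_n}) and check that none of them is the middle vertex $u$ of such an $R$-copy: the vertices along the central ``$\cdots$'' chain each lie on two non-loop edges and carry no loop, the vertex $p$ carries a loop $w$ but is otherwise attached only to the single edge $v$ (degree $3$, not $4$, and not flanked by two distinct edges of the right type), and symmetrically the vertex $l$ carries the loop $b$ but is otherwise attached only to $a$. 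In each case the local picture fails to match $R$, so $O_n$ has no middle-vertex-of-$R$, hence no simple contraction, hence (as simple contractions are the only contractions) no contraction.

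With that in hand the argument closes quickly: a $0$-cube $[g]$ of $K(O_n)$ with $h([g]) = \mu([g]) \le 2n+3$ is represented by $g \colon X(O_n) \to X(E)$ with $|E(E)| \le 2n+3 < 2n+4 = |E(O_n)|$. Decomposing $g$ as an expansion rearrangement followed by a base isomorphism followed by a contraction rearrangement, we see $E$ is obtained from $O_n$ by a sequence of simple expansions and simple contractions; since no graph appearing in this sequence can be contracted below the edge count it was expanded to, and $O_n$ itself cannot be contracted, every graph reachable from $O_n$ has at least $|E(O_n)| = 2n+4$ edges — more carefully, each simple expansion adds $|E(R)| - 1 = 2$ edges and each simple contraction removes $2$, and the ``no contraction possible'' analysis applies verbatim to every expansion of $O_n$ as well (an expansion only adds the rigid $R$-pattern away from the problematic local pictures, never creating a new contractible spot that would bring the total below $2n+4$). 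Hence $|E(E)| \ge 2n+4$, contradicting $|E(E)| \le 2n+3$. Therefore $K(O_n)_{2n+3}$ has no $0$-cubes and is empty.

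The step I expect to be the main obstacle is the verification that \emph{no expansion of $O_n$ admits a contraction reducing the edge count below $2n+4$} — i.e.\ making the local ``no $R$-copy with a degree-$4$ middle vertex'' analysis genuinely robust under expansions rather than just checking it on $O_n$ itself. One must argue that the only way an expansion of $O_n$ could become contractible is by contracting away exactly the $R$-pattern that was just introduced (which nets zero change and doesn't help), and that the loops $w$ at $p$ and $b$ at $l$ in $O_n$, being attached to vertices of degree $3$, can never be ``completed'' to a full $R$-copy by any expansion because expansions do not raise the degree of an existing vertex in the relevant way. Making this precise — probably by tracking, for each graph in the expansion tree, which vertices could possibly serve as the middle vertex of an $R$-copy, and showing this set stays empty for $O_n$-descendants — is the technical heart of the lemma.
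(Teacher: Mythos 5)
There is a genuine gap, and it sits exactly where you flag it: the claim that no graph reachable from $O_n$ by an interleaved sequence of expansions and contractions can ever develop a ``new'' contractible spot. You verify that $O_n$ itself contains no copy of $R$ with a degree-$4$ middle vertex, and that a single expansion of $O_n$ only creates such a vertex at the newly added vertex; but the graphs you must control are not expansions of $O_n$ --- after decomposing a rearrangement as expansion, base isomorphism, contraction, the range graph is a contraction of an expansion of $O_n$, and after a few contractions you leave the class of expansions of $O_n$ entirely, so your analysis cannot be applied ``verbatim'' to all graphs that occur. Your local description of a contractible configuration is also off: since replacements of loops are allowed, the outer vertices $v',w'$ of a contractible copy of $R$ need not be distinct and may share edges (e.g.\ $G_0$ itself is a legitimate contraction target although its two vertices share two edges), so contractions --- unlike expansions --- can create loops at \emph{old} vertices. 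Hence one must exclude that some long mixed sequence of expansions and contractions migrates a loop onto one of the degree-$4$ chain vertices of $O_n$ and then deletes that vertex, and nothing in your proposal (degrees of old vertices being preserved, the loops at $p$ and $l$ sitting at degree-$3$ vertices) rules this out.

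The paper closes exactly this gap with a global invariant instead of a local case check. Using that every relevant graph has all its edges on the boundary of the outer region, so the edges carry a cyclic order, it calls a vertex \emph{collapsible} if it has degree $4$ and lies on a non-constant closed path of consecutive edges meeting only degree-$4$ vertices, and proves that the quantity (number of collapsible vertices) minus (number of vertices) is unchanged under every simple expansion and contraction: the new vertex of an expansion is collapsible, collapsible vertices stay collapsible, and non-collapsible vertices never become collapsible. Since $O_n$ has no collapsible vertices, every reachable graph has at least $n+3$ vertices, hence (by the count $E-(2n+4)=2(V-(n+3))$, which you also use implicitly) at least $2n+4$ edges. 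Your reduction of the lemma to this edge-count statement agrees with the paper's, but without an invariant of this kind the step you yourself call the technical heart remains unproven, so the proposal is incomplete.
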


\begin{proof}
 We need to show that any graph obtained from $O_n$ via a sequence of expansions and contractions has at least $2n+4$ edges. Call such a graph \emph{relevant}. Note first that for any relevant graph, denoting by $V$ the number of vertices and $E$ the number of edges, we have $E - (2n+4) = 2(V - (n+3))$. Thus we must equivalently show that any relevant graph has at least $n + 3$ vertices. Note further that in a relevant graph all edges lie in the boundary of the outer region, so they inherit a cyclic ordering. We call a path $e_1,\dots,e_k$ in a relevant graph \emph{ordered} if the edges are successive in the cyclic order.
 
 We say that a non-constant ordered closed path in a relevant graph $G$ is a \emph{special circuit} if it only meets vertices of degree $4$. We call a vertex $v$ of such a graph \emph{collapsible} if it is of degree $4$ and there is a special circuit connecting $v$ to itself. We see that $O_n$ has zero collapsible vertices. Intuitively, if $v$ is a collapsible vertex then after repeated contractions it can be made to disappear.
 
 Let $G$ be a relevant graph, let $\varepsilon$ be an edge of $G$ and let $G \lhd \varepsilon$ be the corresponding simple expansion. Then the new vertex of $G \lhd \varepsilon$ is collapsible: the special circuit in question is just the new loop, $\varepsilon2$. Any vertex of $G$ that is collapsible is still collapsible in $G \lhd \varepsilon$: a special circuit in $G$ that uses $\varepsilon$ can be turned into a special circuit in $G \lhd e$ by replacing $\varepsilon$ by $\varepsilon 1, \varepsilon 2, \varepsilon 3$. Finally, if a vertex $v$ of $G$ is not collapsible, then it does not become collapsible in $G \lhd \varepsilon$: indeed, a special circuit in $G \lhd \varepsilon$ at $v$ that uses one of $\varepsilon 1$, $\varepsilon 2$ or $\varepsilon 3$ has to use all of them, so replacing the sequence $\varepsilon 1, \varepsilon 2, \varepsilon 3$ by $\varepsilon$ gives rise to a special circuit in $G$.
 
 The result of this discussion is that the number of collapsible vertices minus the number of vertices is invariant under arbitrary expansions and contractions. Since the number of collapsible vertices cannot become negative, we conclude that any graph obtained from $O_n$ via expansions and contractions has to have at least $n+4$ vertices.
\end{proof}

Let $Y_n\defeq K(G_0)_{2n+9}$. We will now verify that for $n$ sufficiently large, all the hypotheses of Lemma~\ref{lem:two_walls} hold, with $Y_n$ playing the role of $Y$ and $H_1$ and $H_2$ being our two walls.

\begin{corollary}[Empty middle]\label{cor:empty_intersection}
 The intersection $H_1\cap H_2\cap Y_n$ is empty.
\end{corollary}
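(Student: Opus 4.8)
The plan is to pull the statement back through the isomorphism $H_1 \cap H_2 \cong K(O_n)$ recorded just above (it comes from two applications of Lemma~\ref{lem:model_wall}: first model $H_1$ as $K(G')$ where $G'$ is $J_n$ with the triangle $\{x,y,z\}$ contracted to a single edge and then that edge deleted; then, inside $K(G')$, model the image of $H_2$ the same way using $\{c,d,e\}$, ending up with exactly $O_n$), and then to invoke Lemma~\ref{lem:empty_sublevel}. The only point needing care is that this isomorphism carries the sublevel filtration correctly, i.e.\ that it shifts the rank Morse function by the constant $6 = |E(J_n)| - |E(O_n)|$.

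To check this I would describe a $0$-cube of $H_1 \cap H_2$ as a $2$-cube $c$ of $K(G_0)$ cut through by both $H_1$ and $H_2$. Writing the lowest-rank $0$-face of $c$ as $[F]$ with $F \colon X(G_0) \to X(G)$, the two walls cutting $c$ are $H_1$ and $H_2$, so the other three $0$-faces are $[\Delta_{\zeta_1}F]$, $[\Delta_{\zeta_2}F]$, $[\Delta_{\{\zeta_1,\zeta_2\}}F]$ for two edges $\zeta_1,\zeta_2 \in E(G)$, with ranks $\mu(F)+2$, $\mu(F)+2$, $\mu(F)+4$. Unwinding the (iterated) construction in the proof of Lemma~\ref{lem:model_wall}, this $0$-cube of $H_1\cap H_2$ corresponds to the $0$-cube of $K(O_n)$ represented by the restriction of $F$ to $X(G\setminus\{\zeta_1,\zeta_2\})$, which has rank $\mu(F)-2$. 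Since $Y_n = K(G_0)_{2n+9}$ is supported on $0$-cubes of rank at most $2n+9$, the cube $c$ lies in $Y_n$ exactly when its top rank satisfies $\mu(F)+4 \le 2n+9$, i.e.\ when $\mu(F)-2 \le 2n+3$, i.e.\ when the corresponding $0$-cube of $K(O_n)$ lies in $K(O_n)_{2n+3}$. (Treating $0$-cubes is enough: any cube of $Y_n$ cut through by both $H_1$ and $H_2$ has a $2$-dimensional face with the same property, and a subcomplex with no $0$-cells is empty.)

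Granting this translation, the corollary is immediate: by Lemma~\ref{lem:empty_sublevel} the complex $K(O_n)_{2n+3}$ is empty, so no $2$-cube of $K(G_0)$ that is cut through by both $H_1$ and $H_2$ lies in $Y_n$, which is precisely the statement that $H_1 \cap H_2 \cap Y_n = \emptyset$. The step I expect to be the main obstacle is verifying the rank shift, i.e.\ pinning down exactly which $0$-cube of $K(O_n)$ gets attached to a given $2$-cube of $H_1\cap H_2$ and thereby confirming the constant $6$; this means running the proof of Lemma~\ref{lem:model_wall} twice and keeping track of the six ``frozen'' edges $x,y,z,c,d,e$ of $J_n$ that are deleted in passing to $O_n$. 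Everything else is formal.
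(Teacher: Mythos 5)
Your proposal is correct and follows the paper's own argument: identify $H_1\cap H_2$ with $K(O_n)$ via Lemma~\ref{lem:model_wall}, check that the rank shifts by the constant $6$ (the paper states exactly this, that a point of $K(O_n)$ of rank $r$ corresponds to a $2$-cube of $H_1\cap H_2$ of maximum rank $r+6$), and then apply Lemma~\ref{lem:empty_sublevel}. Your explicit verification of the shift via the lowest-rank face $[F]$ and the restriction to $X(G\setminus\{\zeta_1,\zeta_2\})$ just fills in a computation the paper asserts without detail.
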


\begin{proof}
 By Lemma~\ref{lem:model_wall} the intersection $H_1\cap H_2$ is isomorphic to $K(O_n)$, and $H_1 \cap H_2 \cap Y_n$ is isomorphic to $K(O_n)_{2n+3}$ since a point in $K(O_n)$ with rank $r$ corresponds to a $2$-cube of $H_1\cap H_2$ with maximum rank $r+6$. By Lemma~\ref{lem:empty_sublevel} $K(O_n)_{2n+3}$ is empty.
\end{proof}

\begin{lemma}[Non-empty quarter-spaces]\label{lem:nonempty_quarterspaces}
 The intersections $H_1^{\delta_1}\cap H_2^{\delta_2} \cap Y_n$ are non-empty for any $\delta_1,\delta_2 \in\{+,-\}$.
\end{lemma}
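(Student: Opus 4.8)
The plan is to exhibit, for each of the four sign choices $\delta_1,\delta_2\in\{+,-\}$, an explicit $0$-cube of $K(G_0)$ lying in $H_1^{\delta_1}\cap H_2^{\delta_2}$ and of rank at most $2n+9$, so that it belongs to $Y_n=K(G_0)_{2n+9}$. The obvious base point for $H_1^+\cap H_2^+$ is $[f_n]$ itself, but $\mu([f_n])=\abs{E(J_n)}=2n+10>2n+9$, so $[f_n]\notin Y_n$; supplying a replacement for this one quarter-space is the only genuine difficulty, and I expect it to be the main obstacle. Throughout I use the standard fact about $\CAT(0)$ cube complexes (see \cite{haglund08}, recalled in Section~\ref{sec:cat0}) that two distinct $1$-cubes sharing a $0$-cube are dual to distinct hyperplanes; consequently a $1$-cube not dual to $H_i$ has both endpoints on the same side of $H_i$, and applying a simple contraction $\nabla_S$ to a rearrangement drops the rank by $\abs{E(R)}-1=2$.

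For three of the quarter-spaces we take the remaining three corners of the square $c_n$. By construction $c_n$ is cut by exactly the two hyperplanes $H_1$ and $H_2$, and $H_i^+$ was chosen to contain $[f_n]$. The corner $[\nabla_{x,y,z}\circ f_n]$ is the far endpoint of an $H_1$-dual $1$-cube, hence lies in $H_1^-$; it is joined to $[f_n]$ by that same $1$-cube, which is dual to $H_1\ne H_2$ and hence not dual to $H_2$, so it lies in $H_2^+$. Symmetrically $[\nabla_{c,d,e}\circ f_n]\in H_1^+\cap H_2^-$, and the corner $[\nabla_{c,d,e}\circ\nabla_{x,y,z}\circ f_n]$ opposite $[f_n]$ lies in $H_1^-\cap H_2^-$. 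Their ranks are $2n+8$, $2n+8$ and $2n+6$ respectively, all at most $2n+9$, so all three corners lie in $Y_n$.

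It remains to treat $H_1^+\cap H_2^+$, and here the point is that $J_n$ carries a third collapsible pattern disjoint from $\{x,y,z\}$ and $\{c,d,e\}$. In Figure~\ref{fig:center_vertex} the vertex $p$ has degree $4$, being incident to the loop $w$ and the two non-loop edges $v$ and $x$; thus $\{v,w,x\}$ is a copy of $R$ (with $p$ in the role of the new vertex, $o$ and $q$ in the roles of $v$ and $w$), and $\nabla_{v,w,x}$ is a valid simple contraction of $f_n$, replacing $v,w,x$ by a single edge from $o$ to $q$. Since the patterns $\{v,w,x\}$, $\{x,y,z\}$ and $\{c,d,e\}$ are centered at the three distinct vertices $p$, $q$ and $m$, the $1$-cube from $[f_n]$ to $[\nabla_{v,w,x}\circ f_n]$ is distinct from both the $H_1$-dual and the $H_2$-dual $1$-cube at $[f_n]$; hence it is dual to a hyperplane different from $H_1$ and from $H_2$, so $[f_n]$ and $[\nabla_{v,w,x}\circ f_n]$ lie on the same side of each of $H_1$ and $H_2$. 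Therefore $[\nabla_{v,w,x}\circ f_n]\in H_1^+\cap H_2^+$, and its rank is $2n+8\le 2n+9$, so it lies in $Y_n$. This closes the argument.

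The main obstacle is precisely the $(+,+)$ quarter-space just handled: $[f_n]$ misses $Y_n$ by a single unit of rank, so one must find a strictly lower vertex still lying on the $[f_n]$-side of both walls, and the crucial observation is that $J_n$ was set up with an extra collapsible pattern (at $p$, or symmetrically at $l$) beyond the two patterns whose contractions cross $H_1$ and $H_2$; contracting that extra pattern lowers the rank by $2$ without crossing either hyperplane. The rest is the routine rank bookkeeping and the elementary hyperplane combinatorics recorded above.
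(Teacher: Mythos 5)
Your proof is correct and follows essentially the same strategy as the paper: exhibiting, for each sign pattern, an explicit vertex of rank at most $2n+9$ in the corresponding quarter-space and noting that contractions along edges not dual to $H_1$ or $H_2$ do not change sides. The paper just makes a more symmetric choice of witnesses, namely the four doubly-contracted vertices $[\nabla_{a,b,c}\circ\nabla_{v,w,x}\circ f_n]$, $[\nabla_{c,d,e}\circ\nabla_{v,w,x}\circ f_n]$, $[\nabla_{a,b,c}\circ\nabla_{x,y,z}\circ f_n]$ and $[\nabla_{c,d,e}\circ\nabla_{x,y,z}\circ f_n]$, each of rank $2n+6$, whereas you use three corners of $c_n$ together with $[\nabla_{v,w,x}\circ f_n]$ for the $(+,+)$ case; both choices work.
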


\begin{proof}
 The vertices $[\nabla_{a,b,c} \circ \nabla_{v,w,x} \circ f_n]$, $[\nabla_{c,d,e} \circ \nabla_{v,w,x} \circ f_n]$, $[\nabla_{a,b,c} \circ \nabla_{x,y,z} \circ f_n]$ and $[\nabla_{c,d,e} \circ \nabla_{x,y,z} \circ f_n]$ respectively lie in $H_1^+\cap H_2^+ \cap Y_n$, $H_1^+\cap H_2^- \cap Y_n$, $H_1^-\cap H_2^+ \cap Y_n$ and $H_1^-\cap H_2^- \cap Y_n$, and each of these vertices has rank $2n+6$, so lies in $Y_n$.
\end{proof}

\begin{lemma}[Connected half-spaces]\label{lem:conn_halfspaces}
 Let $n \in \N$. For $i\in\{1,2\}$ and $\delta\in\{+,-\}$, we have that $H_i^\delta \cap Y_n$ is connected.
\end{lemma}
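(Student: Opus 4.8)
The plan is to show that $H_i^\delta\cap Y_n$ is connected by exhibiting it as a sublevel set of the $\CAT(0)$ cube complex modeling the half-space, and then arguing that this sublevel set is connected for all $n$ by using the Morse function directly. First I would use Lemma~\ref{lem:model_wall} (or rather a half-space version of it) to identify each half-space $H_i^\delta$ with a cube complex of the form $K(G_i^\delta)$ for an appropriate graph $G_i^\delta$: the wall $H_i$ is isomorphic to $K(O_n')$ for a graph $O_n'$ obtained from $J_n$ by deleting the edge cut by $H_i$, and a half-space is the subcomplex of $K(G_0)$ consisting of $0$-cubes lying on one side of $H_i$. Concretely, $H_1^+$ and $H_2^+$ consist of range-equivalence classes $[g]$ where $g$ does not expand beyond $[f_n]$ in the direction $\{x,y,z\}$ (resp.\ $\{c,d,e\}$), and $H_1^-$, $H_2^-$ consist of those that do; each of these is itself (isomorphic to) a cube complex $K(G)$ for a suitable base graph. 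Under this identification, $H_i^\delta\cap Y_n$ becomes a sublevel set $K(G)_{m}$ for an explicit $m$ (shifted from $2n+9$ by the number of edges that differ between $G_0$, $J_n$ and $G$).

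Having reduced to a statement of the form ``$K(G)_m$ is connected,'' I would prove connectedness by a standard Morse-theoretic argument: $K(G)$ is $\CAT(0)$, hence connected, and to descend to the sublevel set $K(G)_m$ it suffices by the Morse Lemma (Lemma~\ref{lem:morse}, with $n=1$) to check that the descending link $\dlk x$ is connected (i.e.\ $0$-acyclic) for every $0$-cube $x$ with $h(x)>m$. The descending link of a $0$-cube $[f]$ with $f\colon X(G_0)\to X(G')$ consists of simple contractions of $[f]$; contracting the replacement copy of $R$ at a vertex $u$ of $G'$ is possible exactly when $u$ has the appropriate local form (a ``$\nabla$-able'' vertex, i.e.\ one that is the image of a simple expansion), and the descending link is the simplicial complex on the set of such $\nabla$-able vertices with a simplex for each set of pairwise compatible (vertex-disjoint) contractions. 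For connectedness one needs: whenever $h([f])>m$ there are at least two $\nabla$-able vertices, and the ``compatibility graph'' on them is connected — in fact one typically shows any two $\nabla$-able vertices are either compatible or both compatible with a third, so the descending link is even simply connected.

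The main obstacle, and the step requiring care, is exactly this local count: one has to show that for $G$ equal to any of the four relevant base graphs (the two ``outer'' graphs for $H_i^-$ and the two for $H_i^+$), every relevant graph $G'$ with more than $m$ edges has at least two $\nabla$-able vertices, and that one can always reduce the number of edges while staying inside the half-space and eventually reach a common vertex of $K(G)_m$ — equivalently, that $K(G)_m$ is connected even though we cannot simply appeal to $K(G)$ being connected (we must stay below level $m$). This is analogous to, but easier than, Lemma~\ref{lem:empty_sublevel}: there the point was that $\nabla$-able vertices could be forced to be \emph{too few}, here we are above the critical level $2n+3$ (the bound is $2n+9$, well above), so there is plenty of room and $\nabla$-able vertices are abundant. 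I would carry this out by the same bookkeeping as in Lemma~\ref{lem:empty_sublevel} — tracking ``collapsible'' vertices and the cyclic order of edges around the outer face — to guarantee that above level $m$ two compatible contractions always exist, so that the descending links are connected and the Morse Lemma gives connectedness of $H_i^\delta\cap Y_n$; I would then remark that the same argument (and in fact the cleaner statement that descending links are simply connected) handles all four cases uniformly.
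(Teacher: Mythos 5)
There is a genuine gap, in fact two. First, your opening reduction --- that each half-space $H_i^\delta$ is isomorphic to a complex $K(G)$ for a suitable base graph $G$ --- is unjustified and almost certainly false. Lemma~\ref{lem:model_wall} models \emph{walls} (equivalently hyperplanes), not half-spaces. A half-space is cut out by the $\CAT(0)$ combinatorics (``all $0$-cubes on one side of $H_i$'') and has no evident description as the rearrangement complex of a fixed base graph: for instance $H_1^+$ contains vertices such as $[\nabla_{v,w,x}\circ f_n]$, obtained by contracting a subgraph that overlaps $\{x,y,z\}$, while every vertex of $K(G_0)$ whatsoever can be written in the form $[\psi\circ\nabla_{x,y,z}\circ f_n]$, so neither side admits the kind of ``prefix'' parametrization your identification would require. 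The paper never models half-spaces; it works inside $K(G_0)$ and only uses that half-spaces are convex, hence contractible.

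Second, and more seriously, your Morse-theoretic step misses the actual difficulty. For $H_i^-$ the argument you envisage does work (and is what the paper does), because the wall-crossing edge at a vertex of $H_i^-$ is \emph{ascending}, so descending links inside $H_i^-$ coincide with those in $K(G_0)$, which are connected for rank $>2n+9$ by Belk--Forrest. But for $H_i^+$ the wall-crossing edge is \emph{descending}: the descending link inside $H_1^+$ is the full descending link with the star of the contraction $Z$ across $H_1$ removed, and this can disconnect it --- two available contractions $A$ and $B$ may share an edge and be joined in the descending link only through $[\nabla_Z\circ f]$. Your appeal to ``plenty of room'' and abundance of contractible subgraphs does not address this: high rank does not force many contractible subgraphs (that scarcity is exactly what Lemma~\ref{lem:empty_sublevel} exploits), and even a connected descending link may become disconnected after deleting one vertex's star. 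The paper's proof of this case is an explicit path perturbation: from $y=[\nabla_A f]$ it first contracts $B'$ to drop below $h(x)-1$, then expands an edge $\varepsilon$ of $Z$ and contracts $Z'$, $Z''$ so as to ``untwist'' $A$ and $B$ without crossing $H_1$ and while staying strictly below height $h(x)$, then runs the construction backwards to reach $z=[\nabla_B f]$. That detour is the substantive content your proposal lacks. (A minor point: your remark that the descending links are simply connected cannot be right in general --- if descending links were eventually simply connected the standard Morse argument would make $T_B$ of type $\FP_2$, contradicting the main theorem of the paper.)
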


\begin{figure}[htb]
\begin{tikzpicture}[scale=1.7, very thick,none/.style={text opacity=0,opacity=0,fill opacity=0}]
\useasboundingbox (-1,-1) rectangle (3,1.25);
\begin{scope}[every node/.style={circle, fill, inner sep=2pt}]
\node[label=below:{}] (x) at (0,0) {};
\node[label=below:{}] (y) at (1,0) {};
\node[label=below:{}] (z) at (2,0) {};
\draw[none] (z) to [out=180-\picangle, in=0+\picangle]  coordinate[pos=0.5] (yz) (y);
\node[label=below:{}] (yz) at (yz) {};
\end{scope}
\begin{scope}
\path (x) edge[line width= 2pt, OliveGreen,->-,out=180-\loopangle, in = 180+\loopangle, min distance = 1cm, looseness=5, left] node {$\delta$} (x);
\path (z) edge[line width= 2pt, blue,->-,out=0-\loopangle, in = 0+\loopangle, min distance = 1cm, looseness=5, right] node {$B$} (z);

\path (z) edge[line width= 2pt, blue,->-,out=180-\picangle, in = 0, above right] node {} (yz);
\path (z) edge[red,->-,out=180-\picangle, in = 0, above right] node {} (yz);
\path (yz) edge[line width= 2pt, red, ->-,out=90-\loopangle, in = 90+\loopangle, min distance = 1cm, looseness=5, above] node {$A$} (yz);
\path (yz) edge[line width= 2pt, red, ->-,out=180, in = 0+\picangle, above left] node {} (y);

\path (x) edge[line width= 2pt, OliveGreen,->-,out=0-\picangle, in = 180+\picangle, below] node {$\varepsilon$} (y);
\path (y) edge[line width= 2pt, OliveGreen,->-,out=180-\picangle, in = 0+\picangle, above] node {$\gamma$} (x);

\path (y) edge[line width= 2pt, blue,->-,out=0-\picangle, in = 180+\picangle, below] node {} (z);
\node at (-.5,-0.5) {\textcolor{OliveGreen}{$Z$}};
\end{scope}
\end{tikzpicture}
\begin{tikzpicture}[scale=1.7, very thick,none/.style={text opacity=0,opacity=0,fill opacity=0}]
\useasboundingbox (-1,-1) rectangle (3,1.25);
\begin{scope}[every node/.style={circle, fill, inner sep=2pt}]
\node[label=below:{}] (x) at (0,0) {};
\node[label=below:{}] (y) at (1,0) {};
\node[label=below:{}] (z) at (2,0) {};
\draw[none] (z) to [out=180-\picangle, in=0+\picangle]  coordinate[pos=0.5] (yz) (y);
\node[label=below:{}] (yz) at (yz) {};
\draw[none] (x) to [out=0-\picangle, in = 180+\picangle, below] coordinate[pos=0.5] (xy) (y);
\node[label=below:{}] (xy) at (xy) {};
\end{scope}
\begin{scope}
\path (x) edge[->-,out=180-\loopangle, in = 180+\loopangle, min distance = 1cm, looseness=5, left] node {} (x);
\path (z) edge[line width= 2pt, blue,->-,out=0-\loopangle, in = 0+\loopangle, min distance = 1cm, looseness=5, right] node {$B$} (z);

\path (z) edge[line width= 2pt, blue,->-,out=180-\picangle, in = 0, above right] node {} (yz);
\path (z) edge[red,->-,out=180-\picangle, in = 0, above right] node {} (yz);
\path (yz) edge[line width= 2pt, red, ->-,out=90-\loopangle, in = 90+\loopangle, min distance = 1cm, looseness=5, above] node {$A$} (yz);
\path (yz) edge[line width= 2pt, red, ->-,out=180, in = 0+\picangle, above left] node {} (y);

\path (x) edge[line width = 2pt, Sepia,->-,out=0-\picangle, in = 180, below left] node {$\varepsilon 1$} (xy);
\path (xy) edge[line width = 2pt, Sepia,->-,out=-90-\loopangle, in = -90+\loopangle, min distance = 1cm, looseness=5, below] node {$\varepsilon 2$} (xy);
\path (xy) edge[line width = 2pt, Sepia,->-,out=0, in = 180+\picangle, below right] node {$\varepsilon 3$} (y);

\path (y) edge[->-,out=180-\picangle, in = 0+\picangle, below] node {} (x);

\path (y) edge[line width= 2pt, blue,->-,out=0-\picangle, in = 180+\picangle, below] node {} (z);
\end{scope}
\end{tikzpicture}

\begin{tikzpicture}[scale=1.7, very thick,none/.style={text opacity=0,opacity=0,fill opacity=0}]
\useasboundingbox (-1,-1) rectangle (3,1.25);
\begin{scope}[every node/.style={circle, fill, inner sep=2pt}]
\node[label=below:{}] (x) at (0,0) {};
\node[label=below:{}] (y) at (1,0) {};
\node[label=below:{}] (z) at (2,0) {};
\draw[none] (z) to [out=180-\picangle, in=0+\picangle]  coordinate[pos=0.5] (yz) (y);
\node[label=below:{}] (yz) at (yz) {};
\draw[none] (x) to [out=0-\picangle, in = 180+\picangle, below] coordinate[pos=0.5] (xy) (y);
\node[label=below:{}] (xy) at (xy) {};
\end{scope}
\begin{scope}
\path (x) edge[line width=2pt, DarkOrchid,->-,out=180-\loopangle, in = 180+\loopangle, min distance = 1cm, looseness=5, left] node {} (x);
\path (z) edge[line width= 2pt, blue,->-,out=0-\loopangle, in = 0+\loopangle, min distance = 1cm, looseness=5, right] node {$B$} (z);

\path (z) edge[line width= 2pt, blue,->-,out=180-\picangle, in = 0, above right] node {} (yz);
\path (z) edge[red,->-,out=180-\picangle, in = 0, above right] node {} (yz);
\path (yz) edge[line width= 2pt, red, ->-,out=90-\loopangle, in = 90+\loopangle, min distance = 1cm, looseness=5, above] node {$A$} (yz);
\path (yz) edge[line width= 2pt, red, ->-,out=180, in = 0+\picangle, above left] node {} (y);

\path (x) edge[line width=2pt, DarkOrchid,->-,out=0-\picangle, in = 180, below left] node {} (xy);
\path (xy) edge[->-,out=-90-\loopangle, in = -90+\loopangle, min distance = 1cm, looseness=5, below] node {} (xy);
\path (xy) edge[->-,out=0, in = 180+\picangle, below right] node {} (y);

\path (y) edge[line width=2pt, DarkOrchid,->-,out=180-\picangle, in = 0+\picangle, above] node {} (x);

\path (y) edge[line width= 2pt, blue,->-,out=0-\picangle, in = 180+\picangle, below] node {} (z);
\node at (-.5,-0.5) {\textcolor{DarkOrchid}{$Z'$}};
\end{scope}
\end{tikzpicture}
\begin{tikzpicture}[scale=1.7, very thick,none/.style={text opacity=0,opacity=0,fill opacity=0}]
\useasboundingbox (-1,-1) rectangle (3,1.25);
\begin{scope}[every node/.style={circle, fill, inner sep=2pt}]
\node[label=below:{}] (y) at (1,0) {};
\node[label=below:{}] (z) at (2,0) {};
\draw[none] (z) to [out=180-\picangle, in=0+\picangle]  coordinate[pos=0.5] (yz) (y);
\node[label=below:{}] (yz) at (yz) {};
\draw[none] (x) to [out=0-\picangle, in = 180+\picangle, below] coordinate[pos=0.5] (xy) (y);
\node[label=below:{}] (xy) at (xy) {};
\end{scope}
\begin{scope}
\path (z) edge[line width= 2pt, blue,->-,out=0-\loopangle, in = 0+\loopangle, min distance = 1cm, looseness=5, right] node {$B$} (z);

\path (z) edge[line width= 2pt, blue,->-,out=180-\picangle, in = 0, above right] node {} (yz);
\path (z) edge[red,->-,out=180-\picangle, in = 0, above right] node {} (yz);
\path (yz) edge[line width= 2pt, red, ->-,out=90-\loopangle, in = 90+\loopangle, min distance = 1cm, looseness=5, above] node {$A$} (yz);
\path (yz) edge[line width= 2pt, red, ->-,out=180, in = 0+\picangle, above left] node {} (y);

\draw[line width=2pt,Magenta,->-] (y) to[out=180-\picangle, in=90] (x) to[out=-90, in = 180] (xy);
\path (xy) edge[line width=2pt,Magenta,->-,out=-90-\loopangle, in = -90+\loopangle, min distance = 1cm, looseness=5, above] node {} (xy);
\path (xy) edge[line width=2pt,Magenta,->-,out=0, in = 180+\picangle, above left] node {} (y);


\path (y) edge[line width= 2pt, blue,->-,out=0-\picangle, in = 180+\picangle, below] node {} (z);
\node at (-.25,-0.5) {\textcolor{Magenta}{$Z''$}};
\end{scope}
\end{tikzpicture}
\caption{An example of the situation at the end of the proof of Lemma~\ref{lem:conn_halfspaces}.}
\label{fig:graph_dance}
\end{figure}
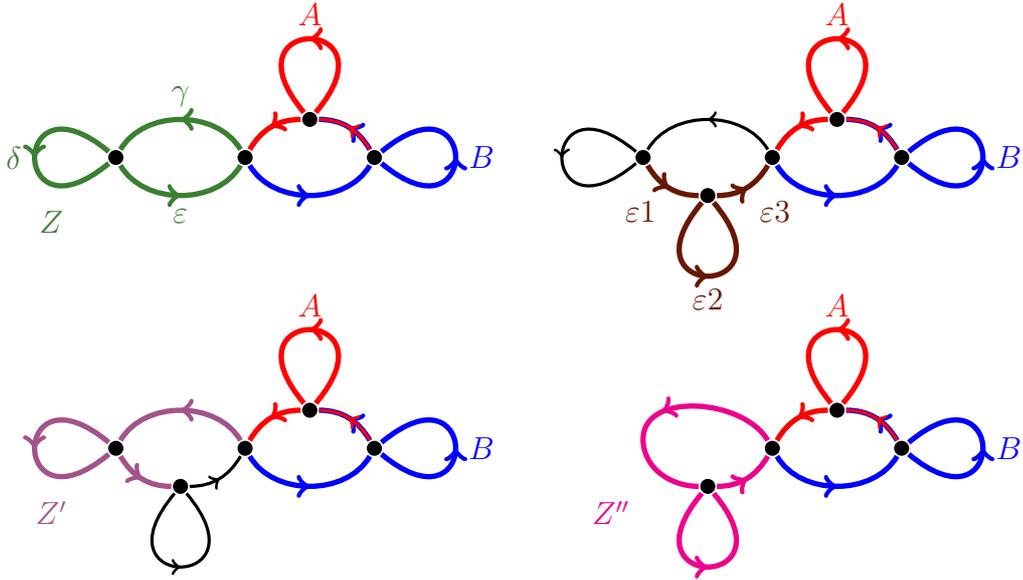

\begin{proof}
We know from \cite[Example~4.5]{belk15a} that the only $0$-cubes $x\in H_i^\delta$ with disconnected descending link have at most $4$ vertices. Thus if $h(x)> 2n+9$ the descending link in $K(G_0)$ is connected. Now $H_i^-$ is contractible (being convex) and for every vertex $x \in H_i^-$ the descending link in $H_i^-$ is the same as the descending link in $K(G_0)$. So the Morse argument shows that $H_i^- \cap Y_n$ is connected.
 
 Now we look at $H_i^+$. We take $i = 1$ for concreteness, but the case $i = 2$ is analogous. Let $x_1, x_2 \in (H_1^+ \cap Y_n)$ be $0$-cubes. Since $H_1^+$ is convex and thus contractible, there is an edge path from $x_1$ to $x_2$ in $H_1^+$. Our goal is to perturb it to lie in $Y_n$. For that purpose it suffices to show that given $0$-cubes $x, y, z \in H_1^+$ with $x$ adjacent to $y$ and $z$, $h(x) > 2n+9$, and $h(y), h(z) < h(x)$ there exists a path from $y$ to $z$ using only vertices of height at most $h(x)-1$. Let $x = [f]$ where $f$ is a rearrangement $X(G_0) \to X(G)$ and let $A$ and $B$ be the ($3$-edge) subgraphs of $G$ such that $y = [\nabla_A \circ f]$ and $z = [\nabla_B \circ f]$. There is a path from $y$ to $z$ in the descending link of $x$ in $K(G_0)$, but we need to deal with the fact that the descending link in $H_1^+$ may be smaller. We only need to treat the case where this actually happens, i.e.\ where some $1$-simplex over $x$ lies in $H_1$. Say the neighbor of $x$ in $H_1^-$ is $x' = [\nabla_Z \circ f]$ for a subgraph $Z$. Moreover, if the path from $y$ to $z$ in the descending link does not pass through $x'$, we are again done, so we can assume that it does pass through $x'$ and, in fact, that $y$ and $z$ are both adjacent to $x'$. In terms of graphs this means that $A$ and $B$ both are edge-disjoint from $Z$ but $A$ and $B$ have an edge in common.
 
 We need some further notation: let $A'$ be the image of $A$ after contracting $B$ and let $B'$ be the image of $B$ after contracting $A$. We informally describe the strategy before giving the technical details. The problem is that the path $y = [\nabla_A f], [\nabla_Z \nabla_A f], x'=[\nabla_Z f], [\nabla_Z \nabla_B f], z = [\nabla_B f]$ is not available because it does not lie in $H_1^+$. So rather than contracting $Z$ we would like to first split an edge $\varepsilon$ of $Z$ and then contract twice (preventing us from crossing $H_1$). The problem with this is that $h([\Delta_\varepsilon \nabla_A f]) = h(x)$. So before we can do that we need to contract $B'$ as well.
 
 To make this formal, we need even more notation: say $Z$ has edges $\gamma$, $\delta$, $\varepsilon$ in that order. So splitting $\varepsilon$ in $Z$ gives rise to a graph $Z \lhd \varepsilon$ with edges $\gamma$, $\delta$, $\varepsilon1$, $\varepsilon2$ and $\varepsilon3$ of which $\delta$ and $\varepsilon2$ are loops. We denote by $Z'$ the subgraph with edges $\gamma$, $\delta$, $\varepsilon1$ and by $Z''$ the result of contracting $Z'$ in $Z \lhd \varepsilon$. Then the vertices $[\Delta_\varepsilon f]$, $[\nabla_{Z'} \Delta_\varepsilon f]$, $[\nabla_{Z''} \nabla_{Z'} \Delta_\varepsilon f]$ all lie in $H_1^+$, but $h([\Delta_\varepsilon f]) = h(x) + 1$.
 
 Finally, here is the path from $y$ to $z$ that does not cross $H_1$ and only passes through vertices of height $< h(x)$. From $y$ it moves down to $[\nabla_{B'} \nabla_A f]$ of height $h(x)-2$. From there it moves through $[\Delta_\varepsilon \nabla_{B'} \nabla_A f]$, $[\nabla_{Z'} \Delta_\varepsilon \nabla_{B'} \nabla_A f]$ to $[\nabla_{Z''} \nabla_{Z'} \Delta_\varepsilon \nabla_{B'} \nabla_A f] = [\nabla_{B'} \nabla_A \nabla_{Z''} \nabla_{Z'} \Delta_\varepsilon f]$ of height $h(x) - 3$. At this point it can ``untwist'' $A$ and $B$ by moving through $[\nabla_A \nabla_{Z''} \nabla_{Z'} \Delta_\varepsilon f]$ to $[\nabla_{Z''} \nabla_{Z'} \Delta_\varepsilon f]$ (of height $h(x) - 1$) and on to $[\nabla_B \nabla_{Z''} \nabla_{Z'} \Delta_\varepsilon f]$ and $[\nabla_{A'} \nabla_B \nabla_{Z''} \nabla_{Z'} \Delta_\varepsilon f] = [\nabla_{Z''} \nabla_{Z'} \Delta_\varepsilon \nabla_{A'} \nabla_B f]$. Now proceeding symmetrically it passes through $[\nabla_{Z'} \Delta_\varepsilon \nabla_{A'} \nabla_B f]$, $[\Delta_\varepsilon \nabla_{A'} \nabla_B f]$, $[\nabla_{A'} \nabla_B f]$ to $z = [\nabla_B f]$.
 \end{proof}

We can now prove a strengthening of our main result.

\begin{theorem}
$T_B$ is not of type $\FP_2$, and hence is not finitely presented.
\end{theorem}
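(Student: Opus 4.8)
The plan is to assemble the ingredients developed above. Set $X \defeq K(G_0)$; recall that $X$ is contractible, hence $1$-acyclic, and that the rank function $h$ is a Morse function on it. First I would record that the hypotheses of Lemma~\ref{lem:fin_iff_conn} hold for the natural action of $T_B$ on $X$, in which $g$ sends $[f]$ to $[f\circ g^{-1}]$: the stabilizer of a $0$-cube $[f]$ is conjugate by $f$ to the (finite) group of base automorphisms of the range graph of $f$, hence finite; the rank is invariant under precomposition, so $h$ and therefore every sublevel set $X_m$ is $T_B$-invariant; and $X_m$ is cocompact because only finitely many isomorphism types of expansions of $G_0$ have at most $m$ edges. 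All of these facts are contained in \cite{belk15a}. Finally, \cite[Example~4.5]{belk15a} (cf.\ the proof of Lemma~\ref{lem:conn_halfspaces}) implies that a $0$-cube of $X$ with disconnected descending link has at most $4$ vertices in its range graph; since an expansion of $G_0$ with $V$ vertices has exactly $2V$ edges, this means $\dlk x$ is connected whenever $h(x)>8$, so the hypothesis of Lemma~\ref{lem:fin_iff_conn} is met with $N=8$.

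By Lemma~\ref{lem:fin_iff_conn} it then suffices to show that $X_m$ is not $1$-acyclic for any $m\ge N$. Fix $n\in\N$ and apply Lemma~\ref{lem:two_walls} with $Y=Y_n=X_{2n+9}$ and the walls $H_1,H_2$: the three required hypotheses are supplied by Lemma~\ref{lem:conn_halfspaces} (each $H_i^\delta\cap Y_n$ is connected), Lemma~\ref{lem:nonempty_quarterspaces} (each $H_1^{\delta_1}\cap H_2^{\delta_2}\cap Y_n$ is non-empty), and Corollary~\ref{cor:empty_intersection} ($H_1\cap H_2\cap Y_n$ is empty). Hence $Y_n$ is not $1$-acyclic, for every $n$. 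To pass from this to all levels $m\ge N$, suppose for contradiction that $X_m$ is $1$-acyclic for some such $m$ and choose $n$ with $2n+9\ge m$. Every $0$-cube of height in $(m,2n+9]$ has height exceeding $N$, so the Morse Lemma~\ref{lem:morse} shows that the inclusion $X_m\hookrightarrow Y_n$ is an isomorphism on $H_0$ and an epimorphism on $H_1$; since $X_m$ is $1$-acyclic, $Y_n$ would then be $1$-acyclic too, which is false. Therefore no $X_m$ with $m\ge N$ is $1$-acyclic, and Lemma~\ref{lem:fin_iff_conn} yields that $T_B$ is not of type $\FP_2$. Since every finitely presented group is of type $\FP_2$, $T_B$ is in particular not finitely presented.

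I expect essentially all of the difficulty to sit in Lemma~\ref{lem:empty_sublevel} and, secondarily, Lemma~\ref{lem:conn_halfspaces}; the present argument is only the formal assembly. Lemma~\ref{lem:empty_sublevel}, through its ``collapsible vertex'' invariant, is precisely what keeps the middle intersection $H_1\cap H_2\cap Y_n$ empty at level $2n+9$ while the four quarter-spaces remain populated, and this imbalance is what produces the non-$1$-acyclic nerve $S^0*S^0\cong S^1$ in Lemma~\ref{lem:two_walls}. Lemma~\ref{lem:conn_halfspaces} is the other delicate input: one must push an arbitrary edge-path in the convex half-space $H_i^+$ down into $Y_n$ without ever crossing the wall $H_i$, which is where the explicit ``graph dance'' of Figure~\ref{fig:graph_dance} does its work. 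If anything goes wrong in realizing this plan, it will be in the level bookkeeping of Lemma~\ref{lem:empty_sublevel} --- getting the bound $2n+4$ (equivalently $n+3$ vertices) exactly right --- and in checking that the half-space descending-link perturbation of Lemma~\ref{lem:conn_halfspaces} never raises the height above $h(x)-1$.
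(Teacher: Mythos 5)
Your proposal is correct and follows essentially the same route as the paper: apply Lemma~\ref{lem:two_walls} to $Y_n$ using Corollary~\ref{cor:empty_intersection} and Lemmas~\ref{lem:nonempty_quarterspaces} and~\ref{lem:conn_halfspaces}, then feed the non-$1$-acyclicity of the $Y_n$ into Lemma~\ref{lem:fin_iff_conn}. The extra steps you include (verifying finite stabilizers, invariance, cocompactness, and the descending-link bound giving $N=8$, plus the Morse-Lemma bridging from the $Y_n$ to all $X_m$ with $m\ge N$) are details the paper leaves implicit or absorbs into the proof of Lemma~\ref{lem:fin_iff_conn}, and they check out.
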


\begin{proof}
Using Lemma~\ref{lem:fin_iff_conn} it suffices to show that $Y_n$ is not $1$-acyclic for arbitrarily large $n$. This follows from Lemma~\ref{lem:two_walls} the hypotheses of which have been verified in Corollary~\ref{cor:empty_intersection} and Lemmas~\ref{lem:nonempty_quarterspaces} and~\ref{lem:conn_halfspaces}.
\end{proof}

\providecommand{\bysame}{\leavevmode\hbox to3em{\hrulefill}\thinspace}
\providecommand{\MR}{\relax\ifhmode\unskip\space\fi MR }
\providecommand{\MRhref}[2]{%
  \href{http://www.ams.org/mathscinet-getitem?mr=#1}{#2}
}
\providecommand{\href}[2]{#2}

\end{document}